\newcounter{dummy} \numberwithin{dummy}{section}
\newtheorem{theorem}[dummy]{Theorem}
\newtheorem{corollary}[dummy]{Corollary}
\newtheorem{lemma}[dummy]{Lemma}
\newtheorem{proposition}[dummy]{Proposition}
\theoremstyle{remark}
\newtheorem{remark}[dummy]{Remark}
\newtheorem{example}[dummy]{Example}
\newcommand{\calF}{\mathcal{F}}
\newcommand{\calH}{\mathcal{H}}
\newcommand{\calL}{\mathcal{L}}
\newcommand{\calV}{\mathcal{V}}
\newcommand{\M}{\mathbb{M}}
\newcommand{\Dhe}{\Delta_{\mathcal{H},\ve}}
\newcommand{\ves}{{\ve^*}}
\newcommand{\ch}{\mathcal{H}}
\newcommand{\scrR}{\mathscr{R}}
\newcommand{\scrT}{\mathscr{T}}
\DeclareMathOperator{\spn}{span}
\DeclareMathOperator{\tr}{tr}
\newcommand{\ve}{\varepsilon}
\DeclareMathOperator{\ptr}{/\! \! /}
\DeclareMathOperator{\Lbra}{[ \! [}
\DeclareMathOperator{\Rbra}{] \! ]}
\numberwithin{equation}{section}
\title{A horizontal Chern-Gauss-Bonnet formula on totally geodesic foliations}
\author{Fabrice Baudoin%
  \thanks{Author supported in part by the NSF Grant DMS 1901315}}
\affil{Department of Mathematics, University of Connecticut, USA\par
 \texttt{fabrice.baudoin@uconn.edu}}
\author{Erlend Grong}
\affil{University of Bergen, Department of Mathematics, P.O.~Box 7803, 5020 Bergen, Norway\par
\texttt{erlend.grong@uib.no}}
\author{Gianmarco Vega-Molino%
 }
\affil{Department of Mathematics, University of Connecticut, USA\par
 \texttt{gianmarco.molino@uconn.edu}}
\begin{document}

\maketitle

\begin{abstract}
Under suitable conditions, we show that the Euler characteristic of a foliated Riemannian manifold can be computed only from curvature invariants which are transverse to the leaves. Our proof uses the hypoelliptic sub-Laplacian on forms recently introduced by two of the authors in \cite{BaGr19}.
 \end{abstract}


\section{Introduction}

The goal of the paper is to prove the following result:

\begin{theorem}\label{theo-intro}
 Let $\M$ be a smooth, connected, oriented and $n+m$ dimensional compact  manifold. We assume that $\M$ is equipped with a  Riemannian foliation $\mathcal{F}$ with bundle-like  metric $g$ and totally geodesic $m$-dimensional leaves.  We also assume that the horizontal distribution $\mathcal{H}=\mathcal{F}^\perp$ is bracket-generating and that there exists $\ve >0$ such that  
\begin{align}\label{J-intro}
 (\nabla_v J)_w = - \frac{1}{2\ve} [J_v, J_w]
 \end{align}
 for any $v,w \in T_x\M$, $x \in \M$, where $\nabla$ is the Bott connection of the foliation and $J$ is the tensor defined in \eqref{Jmap}. Denoting $\chi(\M)$ the Euler characteristic of $\M$:
 \begin{itemize}
 \item If $n$ or $m$ is odd, then $\chi (\M)=0$;
 \item If $n$ and $m$ are both even then
 \[
\chi (\M)=  \int_\M     \hat{\omega}_\mathcal{H}^\ve \wedge \left[  \det \left( \frac{\scrT}{\sinh (\scrT )}\right)^{1/2}\right]_m .
\]
 \end{itemize}
\end{theorem}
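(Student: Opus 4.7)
The plan is to run a sub-Riemannian analogue of the heat-equation proof of Chern-Gauss-Bonnet, with the hypoelliptic sub-Laplacian $\Dhe$ of \cite{BaGr19} replacing the Hodge Laplacian. Granting the spectral and Hodge-theoretic properties of $\Dhe$ needed to identify $\ker \Dhe$ with the de Rham cohomology of $\M$, the McKean-Singer identity gives
\begin{equation*}
\chi(\M)=\operatorname{str}\bigl(e^{-t\Dhe}\bigr)=\int_\M \operatorname{str}_x K_t^\ve(x,x)\,d\mu(x)\qquad\text{for every }t>0,
\end{equation*}
where $K_t^\ve$ is the heat kernel of $\Dhe$ on forms. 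The theorem then reduces to a parity argument when $n$ or $m$ is odd, and to the $t\to 0^+$ pointwise asymptotic of the local supertrace in the even-even case.

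For the parity statement, I would use that $\Dhe$ preserves the horizontal/vertical bigrading $\Lambda^*T^*\M \cong \Lambda^*\calH^*\otimes\Lambda^*\calF^*$, together with a Hodge-star involution on the appropriate tensor factor. When $n$ or $m$ is odd, this involution anticommutes with the supertrace sign on the corresponding graded piece, so $\operatorname{str}(e^{-t\Dhe})\equiv 0$ and hence $\chi(\M)=0$.

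The heart of the proof, and the main obstacle, is the small-time asymptotic of $\operatorname{str}_x K_t^\ve(x,x)$ in the even-even case. Standard Gilkey/Seeley parametrix methods are unavailable because $\Dhe$ is only subelliptic. I would adapt Getzler's rescaling to the sub-Riemannian setting: the hypothesis $(\nabla_v J)_w = -\tfrac{1}{2\ve}[J_v,J_w]$ is precisely what forces the commutator structure of the horizontal and vertical generators to close into a graded nilpotent tangent group at each $x\in\M$, yielding a well-defined model operator with constant coefficients on this group. Rescaling horizontal coordinates by $t^{1/2}$, vertical coordinates by $t$, and the exterior/Clifford variables compatibly with the bigrading, the rescaled operator should converge as $t\to 0^+$ to a Mehler-type harmonic-oscillator operator whose heat kernel admits a closed-form expression. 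Extracting the top Clifford component at the origin factorizes into two pieces: the horizontal directions produce the Pfaffian-type form $\hat{\omega}_\calH^\ve$ of the transverse curvature, while the action of $J$ on the vertical directions produces the $\hat A$-like factor $\det(\scrT/\sinh(\scrT))^{1/2}$, whose degree-$m$ component is precisely the integrand in the statement. Integrating over $\M$ then yields the claimed identity, and the whole argument hinges on making the sub-Riemannian Getzler rescaling converge in closed form, which condition \eqref{J-intro} is engineered to enable.
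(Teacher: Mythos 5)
Your high-level skeleton (McKean--Singer for $\Dhe$ plus a small-time parametrix computation of the local supertrace) is the paper's skeleton too, but there are two concrete problems in your proposal and one genuine methodological divergence worth noting.

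\textbf{The McKean--Singer step cannot be granted.} You write ``Granting the spectral and Hodge-theoretic properties of $\Dhe$ needed to identify $\ker\Dhe$ with the de Rham cohomology of $\M$, the McKean--Singer identity gives\dots''. But $\Dhe$ is only hypoelliptic, not elliptic, so this is precisely what does \emph{not} follow from standard Hodge theory; establishing it is a substantive part of the paper. Section~3 proves $\mathbf{Str}(e^{t\Dhe})=\chi(\M)$ by a deformation argument: it interpolates $\square_{\ve,\theta}=(1-\theta)\Dhe-\theta\mathbf{D}_\ve^2$ from $\Dhe$ to the elliptic Hodge--de Rham Laplacian, shows $\mathbf{D}_\ve$ commutes with $\square_{\ve,\theta}$ to cancel nonzero eigenvalues, shows $\theta\mapsto\mathbf{Str}(e^{t\square_{\ve,\theta}})$ is continuous and integer-valued (hence constant), and only then invokes classical Hodge theory at $\theta=1$. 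You need some version of this; it is not a side remark.

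\textbf{The parity argument via a bigrading involution is flawed.} You claim $\Dhe$ preserves the bigrading $\Lambda^*\calH^*\otimes\Lambda^*\calV^*$ and propose to split off a vertical (or horizontal) Hodge star. But the Weitzenb\"ock decomposition $\Dhe=L_{\calH,\ve}+\scrR_\ve$ uses the connection $\nabla^\ve$, which by construction does \emph{not} preserve $\calH\oplus\calV$ under parallel transport: for $X$ horizontal, $\nabla^\ve_X$ mixes horizontal and vertical components through the torsion and $J$-terms in \eqref{nablave}. So $L_{\calH,\ve}$, and hence $\Dhe$, does not preserve the bigrading, and there is no obvious reason a partial Hodge star should commute with $\Dhe$. (Note also that when both $n$ and $m$ are odd, $\dim\M$ is even, so $\chi(\M)=0$ is a genuinely nontrivial conclusion.) The paper sidesteps the issue entirely: the parity statement falls out of the small-time asymptotic --- Proposition~\ref{patodi} shows the pointwise limit of the local supertrace vanishes when $n$ or $m$ is odd, and combined with the (already established, $t$-independent) McKean--Singer identity this forces $\chi(\M)=0$. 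You should do the same rather than invent a separate algebraic argument.

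\textbf{Getzler rescaling vs.\ the Brownian Chen series parametrix.} For the even-even computation your instincts are sound --- the hypothesis \eqref{J-intro} forces $\calH$ to have step two (Corollary~\ref{cor:SurjV}), all tangent cones are isomorphic two-step Carnot groups, and the parabolic scaling $t^{1/2}$ horizontally and $t$ vertically is the right one. However, the paper does not use a Getzler-type rescaling; it uses the probabilistic Brownian Chen series parametrix of \cite{index,ESAIM,bismutCR} (Appendix~A.2), expanding $e^{t\Dhe}$ via iterated stochastic integrals $\Lambda_I(B)_t$ of a Brownian motion in $\calH_{x_0}$, with the leading density term given by the explicit heat kernel on a two-step Carnot group (Lemma~\ref{densityLemma}) and the curvature contribution extracted by Fermion-calculus supertrace identities and the L\'evy area formula. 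A sub-Riemannian Getzler rescaling is plausible in principle and would be a genuinely different route, but you would need to carry out the convergence of the rescaled operator to a Mehler-type model operator and identify its kernel, none of which is outlined; as written this is a sketch, not a proof.

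In short: the plan is right, but the McKean--Singer identity needs an actual proof (the paper's deformation argument), the involution-based parity argument is incorrect and unnecessary, and the asymptotic computation needs to be carried out by some concrete parametrix method (the paper's is probabilistic; yours, if you can make it converge, would be an interesting alternative).
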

%

Notations are further explained in section 4 but we point out that a remarkable feature of that result is that the density $ \hat{\omega}_\mathcal{H}^\ve \wedge \left[  \det \left( \frac{\scrT}{\sinh (\scrT)}\right)^{1/2}\right]_m$ essentially only depends on \textit{horizontal curvature quantities}. Therefore the theorem illustrates further the fact already observed in \cite{BaGr19} that topological properties of $\M$ might be obtained from horizontal curvature invariants only provided that the bracket-generating condition of the horizontal distribution is satisfied; thus, in essence, the theorem is a sub-Riemannian result. We also note that the condition \eqref{J-intro} is satisfied in a large class of examples including the H-type foliations introduced in \cite{BGMR18}, see Example \ref{HtypeEx}.

The proof of Theorem \ref{theo-intro}  is based on the study of the heat semigroup generated by the hypoelliptic sub-Laplacian on forms recently introduced in \cite{BaGr19}. The heat equation approach to Chern-Gauss-Bonnet type formulas (or index formulas) that we are using is of course not new:  It was suggested by Atiyah-Bott \cite{MR212836} and McKean-Singer \cite{MR217739} and first carried out by Patodi \cite{MR290318} and Gilkey \cite{MR324731} and is by now classical, see the book \cite{MR2273508}. However a difficulty in our setting is that the sub-Laplacian on forms we consider is only hypoelliptic but not elliptic. To carry out the required small-time asymptotics analysis to obtain the horizontal Chern-Gauss-Bonnet formula, we will make use of the probabilistic Brownian Chen series parametrix method first introduced in \cite{index} and which is easy to adapt to hypoelliptic situations, see \cite{bismutCR}.

The paper is organized as follows. In section 2 we introduce the horizontal Laplacian on forms $\Delta_{\calH,\ve}$  and prove that it is a self-adjoint operator if and only if the condition \eqref{J-intro} is satisfied. In section 3, we prove a McKean-Singer type formula for $\Delta_{\calH,\ve}$, namely that for every $t > 0$,
\[
\mathbf{Str} ( e^{t \Dhe }) =\chi (\M).
\]
Finally, in section 4 we study the small-time asymptotics of $\mathbf{Str} ( e^{t \Dhe }) $ and conclude the proof of Theorem \ref{theo-intro}.

%
%
%

\section{Preliminaries}

In this section, we first recall the framework and notations of \cite{BaGr19} and the references therein to which we refer for further details. We then prove a necessary and sufficient condition for the form horizontal Laplacian of a totally geodesic foliation to be a symmetric operator. 

\subsection{Totally geodesic foliations}

Let~$(\M,g)$ be a smooth, oriented, connected, compact Riemannian manifold with dimension $n+m$. We assume that~$\M$ is equipped with a foliation $\mathcal{F}$ with  $m$-dimensional leaves. The distribution $\mathcal{V}$ formed by vectors tangent to the leaves is referred  to as the set of \emph{vertical directions} (or \emph{vertical subbundle}). Define \emph{the horizontal subbundle} $\calH = \calV^\perp$ as its orthogonal complement. We will always assume in this paper that the horizontal distribution $\mathcal{H}$ is everywhere  bracket-generating. The foliation is called \emph{Riemannian} and \emph{totally geodesic} if for any $X \in \Gamma(\calH)$, $Z \in \Gamma(\calV)$, the respective conditions are satisfied,
$$(\calL_Z g)(X,X) =0, \qquad (\calL_X g)(Z,Z) =0.$$

Equivalently, we can describe these conditions using \emph{the Bott connection}. Write $\pi_{\calH}$ and $\pi_{\calV}$ for the respective orthogonal projections to $\calH$ and $\calV$. Let $\nabla^g$ be the Levi-Civita connection of $g$. Introduce a new connection $\nabla$ on $T\M$ according to the rules,
$$\nabla_X Y = \left\{ \begin{array}{ll}
\pi_\calH( \nabla^g_X Y) & \text{for any $X, Y \in \Gamma(\calH)$,} \\
\pi_\calH ([X,Y] )& \text{for any $X \in \Gamma(\calV)$, $Y \in \Gamma(\calH)$,} \\
\pi_\calV ([X,Y] )& \text{for any $X \in \Gamma(\calH)$, $Y \in \Gamma(\calV)$,} \\
\pi_\calV (\nabla^g_X Y) & \text{for any $X, Y \in \Gamma(\calV)$.} 
\end{array}\right.$$
We observe that $\nabla$ preserves $\calH$ and $\calV$ under parallel transport. The foliation $\calF$ is then both Riemannian and totally geodesic if and only if $\nabla g = 0$. For the rest of the paper, we will assume that $\nabla$ is indeed compatible with the metric $g$. The torsion $T$ of $\nabla$ is given by
$$T(X, Y) = - \pi_{\calV} [\pi_{\calH} X, \pi_{\calH} Y ].$$
Define a corresponding endomorphism valued one-form $Z \mapsto J_Z$ by
\begin{equation} \label{Jmap} \langle J_Z X, Y \rangle_g = \langle Z, T(X,Y) \rangle_g, \qquad X,Y, Z \in \Gamma(T\M).\end{equation}
Let $g_\calH$ and $g_\calV$ be the respective restrictions of $g$ to $\calH$ and $\calV$. We then define \emph{the canonical variation} $g$ by $g_\ve = g_\calH \oplus \frac{1}{\ve} g_\calV$, $\ve >0$,  and make the following observations:
\begin{enumerate}[\rm (i)]
\item If $(\M , \calF, g)$ is a Riemannian, totally geodesic foliation, then so is $(\M, \calF, g_\ve)$.
\item If we define the Bott connection according to $g_\ve$, then the resulting connection coincide with the one defined relative to $g$.
\item For any fixed $\ve >0$, define a connection
\begin{equation} \label{hatnablave} \hat \nabla^\ve_XY = \nabla_X Y + \frac{1}{\ve} J_X Y.\end{equation}
This connection preserves $\calH$ and $\calV$ under parallel transport and is compatible with $g_{\ve_2}$ for any $\ve_2 >0$. Furthermore, its torsion
$$\hat T^\ve(X,Y) = T(X,Y) + \frac{1}{\ve} J_X Y - \frac{1}{\ve} J_Y X,$$
is skew-symmetric with respect to $g_\ve$. Hence, if we consider its adjoint connection
\begin{equation} \label{nablave} \nabla_X^\ve Y = \hat \nabla_X^\ve Y - \hat T^\ve(X,Y) = \nabla_X Y - T(X,Y) + \frac{1}{\ve} J_Y X,\end{equation}
it will also be compatible with $g_\ve$. However, $\calH$ and $\calV$ are not parallel with respect to $\nabla^\ve$.
\end{enumerate}

\subsection{Horizontal Laplacian on forms}

For the totally geodesic Riemannian foliation $(\M, \calF, g)$, define its horizontal Laplacian on functions $ f \in C^\infty(\M)$ by
\begin{equation} \label{HorLaplacianFunction}\Delta_{\calH} f = \tr_{\calH} \nabla_{\times}df(\times).\end{equation}
We note that since $\calH$ is assumed to be bracket-generating, from H\"ormander's theorem,  $\Delta_{\calH}$ is a subelliptic operator. We  also note that since $g_\calH$ and the Bott connection will coincide for any $g_\ve$, $\ve >0$, the horizontal Laplacian will coincide for any metric in the canonical variation family.

Consider now the totally geodesic Riemannian foliation $(\M , \calF, g_\ve)$ for some fixed $\ve >0$. We want to extend the horizontal Laplacian on functions \eqref{HorLaplacianFunction} to a differential operator on forms $\Delta_{\calH,\ve}$ satisfying the following requirements:
\begin{enumerate}[\rm (I)]
\item $\Delta_{\calH,\ve} f = \Delta_{\calH}f$ for any smooth function $f$;
\item The operator $\Delta_{\calH, \ve}$ is of Weitzenb\"ock type, i.e. $\Delta_{\calH, \ve} = L_{\calH, \ve} + \scrR_\ve$ where $\scrR_\ve$ is a zero-order differential operator and
\begin{equation} \label{ConnectionLaplacian} L_{\calH,\ve} =\tr_{\calH} \tilde \nabla_{\times, \times}^2,\end{equation}
is the connection horizontal Laplacian of some connection $\tilde \nabla$ compatible with~$g_\ve$;
\item If $d$ is the exterior differential, then
$$[\Delta_{\calH,\ve}, d] =0.$$
\end{enumerate}
Given these requirements, there is an essentially unique extension of $\Delta_{\calH}$ to forms, see \cite{BaGr19, GrTh19} for details. We call $\Delta_{\calH,\ve}$ the $\ve$-horizontal Laplacian on forms. This operator can described as follows.

\begin{proposition}[Horizontal Laplacian on forms, see \cite{BaGr19}]
Consider the  $\ve$-horizontal divergence operator defined by
 $$\delta_{\calH,\ve} \eta = - \tr_{\calH} (\nabla_\times^\ve \eta)(\times, \cdot  ).$$
 The operator
$$\Delta_{\calH,\ve} = - \delta_{\calH,\ve} d - d\delta_{\calH,\ve},$$
is called the $\ve$-horizontal Laplacian on forms and it satisfies the  requirements $\mathrm{(I)}, \mathrm{(II)}, \mathrm{(III)}$. In particular, this operator has Weitzenb\"ock decomposition $\Delta_{\calH, \ve} = L_{\calH, \ve} + \scrR_\ve$ where $L_{\calH,\ve}$ is defined as in \eqref{ConnectionLaplacian} relative to $\nabla^\ve$.
\end{proposition}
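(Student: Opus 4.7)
The three requirements decouple, and I would handle them in increasing order of difficulty: (III) is formal, (I) is a short computation, and (II) is the substantive Weitzenb\"ock-type calculation. For (III), the very definition $\Delta_{\calH,\ve} = -(d\delta_{\calH,\ve} + \delta_{\calH,\ve}d)$ together with $d^2 = 0$ gives $d\Delta_{\calH,\ve} = -d\delta_{\calH,\ve}d = \Delta_{\calH,\ve}d$.

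For (I), take $f \in C^\infty(\M)$. Since $\delta_{\calH,\ve}$ kills $0$-forms, $\Delta_{\calH,\ve}f = -\delta_{\calH,\ve}df$, and in a local horizontal orthonormal frame $\{X_i\}$,
\[
-\delta_{\calH,\ve}df = \sum_i(\nabla^\ve_{X_i}df)(X_i) = \sum_i\bigl(X_i(X_if) - df(\nabla^\ve_{X_i}X_i)\bigr).
\]
The formula \eqref{nablave} reduces $\nabla^\ve_{X_i}X_i$ to $\nabla_{X_i}X_i$: the torsion term vanishes by antisymmetry, and $J_{X_i}X_i = 0$ because, for horizontal $X$ and any $W$, $\langle J_X X, W\rangle_g = \langle X, T(X,W)\rangle_g$ is zero (for horizontal $W$ the torsion lands in $\calV$, and for vertical $W$ the torsion itself vanishes). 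Hence the expression matches \eqref{HorLaplacianFunction}.

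For (II), I would pick a local $g_\ve$-orthonormal adapted frame $\{X_1,\dots,X_n,V_1,\dots,V_m\}$ with dual coframe $\{e^i,\theta^j\}$ and express both operators through $\nabla^\ve$. The universal identity for $d$ with respect to a metric connection of torsion $T^\ve$ is
\[
d\alpha = \sum_i e^i\wedge\nabla^\ve_{X_i}\alpha + \sum_j \theta^j\wedge\nabla^\ve_{V_j}\alpha - \iota_{T^\ve}\alpha,
\]
and by definition $\delta_{\calH,\ve}\alpha = -\sum_i \iota_{X_i}\nabla^\ve_{X_i}\alpha$. Substituting into $-(d\delta_{\calH,\ve} + \delta_{\calH,\ve}d)$ and using the Clifford anticommutator $\iota_{X_i}(e^i\wedge) + (e^i\wedge)\iota_{X_i} = \mathrm{id}$, the pure second-order part collapses to $\sum_i(\nabla^\ve_{X_i}\nabla^\ve_{X_i} - \nabla^\ve_{\nabla^\ve_{X_i}X_i}) = L_{\calH,\ve}$. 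All remaining contributions become zero-order after applying the curvature identity for $[\nabla^\ve_X,\nabla^\ve_Y]$, and assemble into the endomorphism $\scrR_\ve$. The main obstacle is verifying that every a priori first-order term cancels; this requires the $g_\ve$-compatibility of $\nabla^\ve$ (to control how $\iota$ and wedge operations interact with $\nabla^\ve$) together with the explicit form \eqref{nablave} in order to reconcile the horizontal-only trace in $\delta_{\calH,\ve}$ with the full exterior derivative $d$, whose vertical part $\sum_j\theta^j\wedge\nabla^\ve_{V_j}$ has no counterpart in $\delta_{\calH,\ve}$. The detailed bookkeeping is carried out in \cite{BaGr19, GrTh19}, to which the paper defers.
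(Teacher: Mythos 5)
Your argument is correct, and it is in fact more detailed than what the paper offers: the paper states the proposition with a citation to \cite{BaGr19} and provides no proof itself. Your proofs of (I) and (III) are complete and correct (for (I), it is even slightly more than you need, since $J_X=0$ for \emph{every} horizontal $X$ by definition \eqref{Jmap}, as $T$ takes values in $\calV$). For (II) you identify the right strategy and the genuine difficulty — the mismatch between the purely horizontal trace in $\delta_{\calH,\ve}$ and the full exterior derivative — and you defer the bookkeeping to \cite{BaGr19,GrTh19} exactly as the paper does. One point worth flagging if you ever carry out (II) in full: the curvature endomorphism $\scrR_\ve$ in \eqref{scrR1} is expressed in terms of $\hat R^\ve$, the curvature of $\hat\nabla^\ve$, not of $\nabla^\ve$ itself; this arises because $\nabla^\ve$ is the adjoint connection of $\hat\nabla^\ve$ (its torsion is $-\hat T^\ve$), so the term $\iota_{T^{\nabla^\ve}}$ in your formula for $d$ trades the curvature of one connection for the other. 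Keeping this distinction straight is the main source of sign and index errors in the Weitzenb\"ock calculation.
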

We can describe the zero order operator $\scrR_\ve$ can be made explicit, see \cite{BaGr19}. For later use, we will prefer to write the operators using Fermion calculus, see Appendix~\ref{sec:Fermion}. Let $X_1, \dots, X_n$ and $Z_1, \dots, Z_m$ be local orthonormal bases of respectively $\calH$ and $\calV$. Define $a_i = \iota_{X_i}$ and $b_r = \iota_{Z_r}$ for the corresponding annihilation operators, with the dual operators $a^*_i = X^*_i \wedge$ and $b_r^* = Z_r^* \wedge$ a acting by wedge products. The dual are here relative to the $L^2$ inner product with respect to the fixed metric $g$. Relative to the curvature tensor $\hat R^\ve$ of $\hat \nabla^\ve$, write
\begin{equation} \label{indices} \hat R^{\ve,l}_{ijk} = \langle \hat R^\ve(X_i, X_j) X_k, X_l \rangle_g,\end{equation}
and use similar notation for other tensors with indices $i,j,k,l$ denoting evaluations with respect to the basis of $\calH$, indices $r,s$ with respect to the basis of $\calV$. We emphasize that these indices are always defined relative to the fixed metric $g$. Then $\mathscr{R}_\ve$ is given by
\begin{align} \label{scrR1}
\scrR_\ve &= \sum_{i,j,k=1}^n \hat R_{ijk}^{\ve,i} a_k^* a_j   + \sum_{i,k=1}^n \sum_{r=1}^m  \hat R_{irk}^{\ve,i}  a_k^* b_r + \frac{1}{2} \sum_{i,j,k,l=1}^n   \hat R_{ijk}^{\ve,l} a_k^* a_l^* a_j a_i  \\ \nonumber
& \qquad +  \sum_{i,j,k=1}^n \sum_{r=1}^m \hat R_{irk}^{\ve,l} a_k^* a_l ^*b_r a_i  + \frac{1}{2}  \sum_{i,j=1}^n \sum_{r,s=1}^m \hat R_{rsi}^{\ve,j} a_i^* a_j^* b_r b_s.
\end{align}
We want to give a formula for this operator that shows the dependence of $\ve$ explicitly. Let $T$ and $R$ be the curvature of the Bott connection $\nabla$,  and use indices after semi-colons to denote covariant derivatives with respect to this connection. Using Lemma~\ref{lemma:hatRtoR}, Appendix, we can write
\begin{align} \label{scrR}
\scrR_\ve &= \sum_{i,j,k=1}^n \left(R_{kji}^k + \frac{1}{\ve} \sum_{r=1}^m T_{ik}^r T^r_{jk} \right)  a_i^* a_j  - \sum_{i,j=1}^n \sum_{r=1}^m T_{ij;i}^r a_j^* b_r \\ \nonumber
& \qquad + \frac{1}{2} \sum_{i,j,k,l=1}^n  \left(R_{kli}^j +\frac{1}{\ve} \sum_{r=1}^m T_{kl}^r T_{ij}^r \right)  a_i^* a_j^* a_l a_k  +  \sum_{i,j,k=1}^n \sum_{r=1}^m \frac{1}{\ve} T_{ij;k}^r  a_i^* a_j^* b_r a_k \\ \nonumber
& \qquad + \frac{1}{2}  \sum_{i,j=1}^n \sum_{r,s=1}^m \left( \frac{2}{\ve} T_{ij;r}^s + \frac{1}{\ve^2} \sum_{k=1}^n (T_{kj}^r T_{ik}^s - T_{kj}^s T_{ik}^r) \right) a_i^* a_j^* b_s b_r.
\end{align}

\subsection{Symmetry of the horizontal Laplacian} \label{sec:Symmetry}

Consider the exterior algebra
$$\Omega = \Omega(\M) = \bigoplus_{k=0}^{\dim \M} \Omega^k,$$
with the $L^2$-inner product from $g_\ve$. When restricted to elements in $\Omega^0 \oplus \Omega^1$, the operator $\Delta_{\ch,\ve}$ is symmetric if and only if $\ch$ satisfies the Yang-Mills condition, i.e. if
$$\sum_{i=1}^n T_{ij;i}^r =0,\qquad \text{for any $j=1, \dots, n$, $r= 1, \dots, m$.}$$
see \cite{BKW16}.  Considering all forms, we have the following result.
\begin{proposition} \label{prop:Symmetric}
The operator $\Delta_{\ch,\ve}$ is symmetric with respect to the $L^2$-inner product of $g_\ve$ if and only if
\begin{equation} \label{SymmetryCond} (\nabla_v J)_w = - \frac{1}{2\ve} [J_v, J_w],\end{equation}
for any $v,w \in T_xM$, $x \in M$. In particular, $\nabla_v J = 0$ for any $v \in \calH$.
\end{proposition}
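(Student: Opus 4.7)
The natural starting point is the Weitzenb\"ock decomposition $\Delta_{\calH,\ve} = L_{\calH,\ve} + \scrR_\ve$ from the preceding proposition. Since $L_{\calH,\ve}$ is second order while $\scrR_\ve$ is zero order, the operator $\Delta_{\calH,\ve}$ is symmetric with respect to the $L^2$-inner product of $g_\ve$ if and only if the first-order obstruction arising from the second-order part and the residual zero-order obstruction each vanish.

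The first step is to compute the formal $g_\ve$-adjoint of $L_{\calH,\ve} = \tr_\calH (\nabla^\ve)^2$ by horizontal integration by parts. Because $\nabla^\ve$ is $g_\ve$-compatible, the only obstruction to self-adjointness at the second-order level comes from the torsion $\hat T^\ve$ of $\nabla^\ve$, which by \eqref{nablave} involves $T$ and $\frac{1}{\ve} J$. The resulting first-order anti-symmetric contribution is a horizontal-divergence-type operator whose coefficients are proportional to covariant derivatives of $J$ along horizontal directions; its vanishing forces $\nabla_v J = 0$ for $v \in \calH$, generalizing the Yang-Mills condition already known on $\Omega^0 \oplus \Omega^1$.

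The second step is to compute $\scrR_\ve - \scrR_\ve^*$ using the explicit formula \eqref{scrR} and Fermion calculus. A subtle but essential point here is the dictionary between the $g$-defined annihilation/creation operators and the $g_\ve$-adjoint: since $g_\ve|_\calV = \ve^{-1} g|_\calV$, the $g_\ve$-adjoint of $b_r = \iota_{Z_r}$ is $\ve\, b_r^*$, while horizontal operators are unchanged. Inserting this in each of the five terms of \eqref{scrR} and isolating the anti-Hermitian parts, the remaining obstruction involves precisely the combinations $(\nabla_Z J)_w + \frac{1}{2\ve}[J_Z, J_w]$ for vertical $Z$; requiring all such combinations to vanish delivers \eqref{SymmetryCond} with exactly the coefficient $\frac{1}{2\ve}$. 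Together with the horizontal case, this gives the full condition \eqref{SymmetryCond} on $T_x\M$, and the ``in particular'' statement follows since $J_v = 0$ for $v \in \calH$.

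The main obstacle will be the careful bookkeeping of $\ve$-dependent factors: the $\frac{1}{\ve}$ and $\frac{1}{\ve^2}$ coefficients in \eqref{scrR} must combine precisely with the $\ve$-rescaling of the vertical duals so that the coefficient $\frac{1}{2\ve}$ in \eqref{SymmetryCond} emerges rather than any other multiple. A secondary challenge is separating cleanly which terms in \eqref{scrR} contribute to the zero-order symmetry obstruction versus which cancel against the first-order corrections produced by integration by parts in $L_{\calH,\ve}$. Once these matchings are identified, the sufficiency part of the equivalence is obtained by reading the same computations in reverse.
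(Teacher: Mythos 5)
There is a genuine gap at the very start: you claim that after computing the $g_\ve$-adjoint of $L_{\calH,\ve}$ by horizontal integration by parts, ``the only obstruction to self-adjointness at the second-order level comes from the torsion $\hat T^\ve$ of $\nabla^\ve$,'' and that this first-order obstruction forces $\nabla_v J = 0$ for $v \in \calH$. This is incorrect. As the paper notes (citing \cite[Lemma~A.1]{GrTh19}, and in fact as is visible from \eqref{WIdentity}), the connection horizontal Laplacian $L_{\calH,\ve}$ equals $-(\nabla^\ve_{\calH})^*\nabla^\ve_{\calH}$, which is of the form $-A^*A$ with $A^*$ the $g_\ve$-adjoint. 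This operator is \emph{unconditionally} symmetric in $L^2(g_\ve)$, regardless of whether $\nabla J$ vanishes along $\calH$. There is therefore no first-order symmetry obstruction produced by $L_{\calH,\ve}$ waiting to be cancelled against terms in $\scrR_\ve$, and your plan to ``separate which terms cancel against the first-order corrections produced by integration by parts'' is chasing something that is not there. The entire content of the proposition sits in the zero-order curvature operator $\scrR_\ve$.

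Concretely, once you accept that $L_{\calH,\ve}$ is symmetric, the problem is exactly to decide when $\scrR_\ve$ is symmetric. In the representation \eqref{scrR}, both unwanted conditions come out of off-diagonal matrix elements of $\scrR_\ve$ with respect to $g_\ve$: the element between $X_k^*\wedge Z_r^*$ and $X_i^*\wedge X_j^*$ gives $\frac{1}{\ve}T^r_{ij;k}$, whose vanishing is $\nabla_v J=0$ for $v\in\calH$ (not a first-order phenomenon); and the element between $Z_r^*\wedge Z_s^*$ and $X_i^*\wedge X_j^*$ gives $\frac{2}{\ve}T^s_{ij;r}+\frac{1}{\ve^2}\sum_k(T^r_{kj}T^s_{ik}-T^s_{kj}T^r_{ik})$, whose vanishing is precisely \eqref{SymmetryCond} in vertical directions. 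Your observation about the $\ve$-rescaling of the vertical annihilation/creation operators and the $\frac{1}{\ve}$, $\frac{1}{\ve^2}$ bookkeeping is the right thing to be attentive to, and the ``in particular'' clause does indeed follow because $J_v=0$ for $v\in\calH$ kills the bracket on the right-hand side; but the structural claim about $L_{\calH,\ve}$ needs to be removed, and the derivation of the horizontal parallelism $\nabla_{\calH} J=0$ needs to be relocated into the $\scrR_\ve$ computation where it actually lives.
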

We note that under the above condition, the expression of $\scrR_\ve$ reduces to
\begin{align} \label{scrR3}
\scrR_\ve &= \sum_{i,j,k=1}^n \left(R_{kji}^k + \frac{1}{\ve} \sum_{r=1}^m T_{ik}^r T^r_{jk} \right)  a_i^* a_j + \frac{1}{2} \sum_{i,j,k,l=1}^n  \left(R_{kli}^j +\frac{1}{\ve} \sum_{r=1}^m T_{kl}^r T_{ij}^r \right)  a_i^* a_j^* a_l a_k.
\end{align}

\begin{proof}
$L_{\calH,\ve}$ is symmetric by \cite[Lemma~A.1]{GrTh19}, so we only need to determine when $\scrR_\ve$ is symmetric. We choose a local bases $X_1, \dots, X_n$ and $Z_1, \dots, Z_m$ of respectively $\calH$ and $\calV$. We consider the representation of $\scrR_\ve$ as in \eqref{scrR}. Then for $\scrR_\ve$ to be symmetric, we must have
\begin{align*}
0 &= \langle \scrR_\ve X_k^* \wedge Z_r^*, X_i^* \wedge X^j \rangle_{\ve} - \langle \scrR_\ve X_i^* \wedge X_j, X_k^* \wedge Z_r^* \rangle_{\ve} = \frac{1}{\ve} T_{ij;k}^r, \\
0& = \langle \scrR_\ve Z_r^* \wedge Z_s^*, X_i^* \wedge X^j \rangle_{\ve} - \langle \scrR_\ve X_i^* \wedge X_j, Z_r^* \wedge Z_s^* \rangle_{\ve} \\
& = \frac{2}{\ve} T_{ij;r}^s + \frac{1}{\ve^2} \sum_{k=1}^n (T_{kj}^r T_{ik}^s - T_{kj}^s T_{ik}^r).
\end{align*}
These equations are clearly equivalent to \eqref{SymmetryCond}. If these hold, then $\scrR_\ve$ reduces to the expression \eqref{scrR3}, which is symmetric by Lemma~\ref{hatRtoR}~(i).
\end{proof}

\begin{remark}
If we assume that $m=1$ (i.e. the leaves are one-dimensional), then it is immediate from the previous result that the following are equivalent:
\begin{enumerate}[\rm (i)]
\item $\Delta_{\calH, \ve}$ is symmetric for some $\ve >0$.
\item $\Delta_{\calH, \ve}$ is symmetric for all $\ve >0$.
\item $\nabla J=0$.
\end{enumerate}
\end{remark}

\begin{example}[H-type foliations] \label{HtypeEx}
Following definitions given in \cite{BGMR18}, we say that a foliated Riemannian manifold $(\M, \calF, g)$ is of \emph{H-type} if for every $Z \in \Gamma(\calV)$, we have $J_Z^2 = - \| Z \|_{\calV}^2 \pi_{\calH}$. Expand the definition of $J$ from taking values from $\calV$ to its Clifford algebra $\mathbf{Cl}(\calV)$ by the rule $J_1 = \pi_{\calH}$ and iteratively $J_{u \cdot v} = J_u J_v$, $u, v \in \mathbf{Cl}(\calV)$. We then further say that the foliation is of horizontally parallel Clifford type if $\nabla_X J =0$ for any horizontal vector fields $X \in \Gamma(\calH)$ and while for $u,v \in \calV$.
$$(\nabla_{u}J)_v \in J_{\mathbf{Cl}(\calV)}.$$
It then turns out that for some $\kappa \in \mathbb{R}$,
$$(\nabla_{u}J)_v = - \kappa J_{u \cdot v + \langle u, v \rangle} = - \frac{\kappa}{2} [J_u, J_v].$$
The number $\kappa$ determines the Ricci curvature of $\nabla$, see \cite[Theorem 3.16]{BGMR18}. We see that if we have an H-type Riemannian foliation $(\M, \calF, g)$ of horizontally parallel Clifford type, then $\Delta_{\calH,\ve}$ is symmetric with respect to $g_\ve$ for $\ve = \frac{1}{\kappa}$.
\end{example}

Finally, to conclude the section we point out the following result. For the definition of the Carnot-Carath\'eodory metric $d_{cc}$ of the sub-Riemannian manifold $(\M, \calH, g_{\calH})$ and the tangent cone of a metric space, see e.g. \cite{Gro96}.

\begin{corollary} \label{cor:SurjV}
Assume that $\Delta_{\calH,\ve}$ is symmetric on forms for some fixed $\ve >0$. Then the following holds:
\begin{enumerate}[\rm (a)]
\item The horizontal bundle $\calH$ has step~$2$, that is $\calH + [\calH, \calH] = T\M$. In particular, the torsion $T$ of the Bott connection $\nabla$ will be surjective on $\calV$.
\item The tangent cones of the metric space $(\M, d_{cc})$ at any pair of points $x, y \in \M$ are isometric.
\end{enumerate}
\end{corollary}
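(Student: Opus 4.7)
\textbf{Proof plan for Corollary~\ref{cor:SurjV}.} The plan is to extract structural consequences of the symmetry condition
\[
(\nabla_v J)_w = -\frac{1}{2\ve}[J_v, J_w]
\]
provided by Proposition~\ref{prop:Symmetric}, then combine them with the bracket-generating hypothesis on one hand and the classical nilpotent approximation theorem of sub-Riemannian geometry on the other.

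For part~(a), let $\calK_x := \ker(J|_{\calV_x}) \subseteq \calV_x$. First I would show that $\calK$ is preserved by $\nabla$-parallel transport along any curve $\gamma$: if $Z(t)$ is parallel then
\[
\nabla_{\dot\gamma} (J_{Z(t)}) = (\nabla_{\dot\gamma} J)_{Z(t)} + J_{\nabla_{\dot\gamma} Z(t)} = -\frac{1}{2\ve}\bigl[J_{\dot\gamma}, J_{Z(t)}\bigr],
\]
which is a linear ODE in $J_{Z(t)} \in \End(\calH)$, so $J_{Z(0)} = 0$ forces $J_{Z(t)} \equiv 0$. Hence $\calK$ has constant rank and is a $\nabla$-parallel subbundle of $\calV$, and so is its orthogonal complement $\calK^\perp$. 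The duality $\langle J_Z X, Y \rangle_g = \langle Z, T(X,Y) \rangle_g$ identifies the image of $T_x$, viewed as a subspace of $\calV_x$, with $\calK_x^\perp$. Setting $E := \calH \oplus \calK^\perp$, the vertical part of $[X,Y]$ for $X,Y \in \Gamma(\calH)$ is $-T(X,Y) \in \calK^\perp$, while for $X \in \Gamma(\calH)$ and $W \in \Gamma(\calK^\perp)$ the vertical part of $[X,W]$ equals $\nabla_X W$, which remains in $\calK^\perp$ by $\nabla$-parallelism of $\calK^\perp$. Therefore $[\Gamma(\calH), \Gamma(E)] \subseteq \Gamma(E)$, so every iterated bracket of horizontal fields stays pointwise in $E$. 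The bracket-generating hypothesis then forces $E = T\M$, i.e.\ $\calK = 0$, which is equivalent both to surjectivity of $T|_{\calH \times \calH}$ onto $\calV$ and to the step-$2$ condition $\calH + [\calH, \calH] = T\M$.

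For part~(b), observe that $J_v = 0$ for $v \in \calH$, so the symmetry condition immediately gives $(\nabla_v J)_w = 0$ for every $v \in \calH$; equivalently, $J$ is horizontally parallel. Because $\nabla$ preserves $\calH$, $\calV$, and $g$, $\nabla$-parallel transport along any horizontal curve $\gamma$ from $x$ to $y$ is a linear isometry $P_\gamma \colon T_x \M \to T_y \M$ that respects the splitting $\calH \oplus \calV$ and satisfies $P_\gamma(J_Z W) = J_{P_\gamma Z}(P_\gamma W)$, so it transports the Levi form $T|_{\calH \times \calH}$ at $x$ onto the one at $y$. By part~(a) the distribution is step $2$ with constant-rank $\calV$, hence equiregular; the classical theorem identifying the metric tangent cone of an equiregular sub-Riemannian manifold with its nilpotent approximation then realizes the cone at $x$ as the step-$2$ Carnot group determined by $(\calH_x, g_\calH|_x, -T_x|_{\calH_x \times \calH_x})$. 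Chow's theorem provides a horizontal curve between any two points of $\M$, and the corresponding $P_\gamma$ induces an isomorphism of the associated Carnot data, yielding the desired isometry of tangent cones.

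The only genuine subtlety I expect is the constant-rank statement for $\calK$, handled via the linear ODE argument above; once $\calK$ is known to be a $\nabla$-parallel subbundle, the remaining steps are direct consequences of the standard Bott-connection identities for horizontal and vertical brackets together with the quoted nilpotent approximation theorem.
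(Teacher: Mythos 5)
Your proof is correct and ultimately follows the same route as the paper: exploit $\nabla_X T = 0$ for horizontal $X$ to show that $\calH$ cannot generate anything beyond its first brackets, then invoke the bracket-generating hypothesis for part (a); and use $\nabla$-parallel transport along horizontal curves (supplied by Chow's theorem) to transport the torsion, and with it the tangent Carnot group, for part (b). The only genuine difference is the bookkeeping in (a): the paper works modulo the submodule $\calH^2 := \calH + [\calH,\calH]$ directly, never needing to know its rank a priori, whereas you introduce $\calK := \ker(J|_\calV)$, establish its $\nabla$-parallelism (hence constant rank) via a linear ODE, and exhibit $E := \calH \oplus \calK^\perp$ as the involutive hull. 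Since $\calK^\perp = \pi_\calV[\calH,\calH]$, your $E$ is exactly the paper's $\calH^2$, so the two are the same proof with different decompositions. Your version has the merit of making explicit the $\nabla$-parallel constant-rank subbundle underlying the argument, which is a nice structural observation and dovetails with the parallel-transport step in (b); note though that your ODE argument invokes the full symmetry identity $(\nabla_v J)_w = -\tfrac{1}{2\ve}[J_v,J_w]$ along arbitrary curves, whereas the paper only needs the weaker horizontal consequence $\nabla_\calH J = 0$ (one could restrict your ODE to horizontal curves and use Chow's theorem for connectivity, bringing the two arguments into exact agreement).
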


\begin{proof}
\begin{enumerate}[\rm (a)]
\item Recall that if $\Delta_{\calH,\ve}$ is symmetric on forms for some $\ve >0$, then in particular $\nabla_v J =0$ for any $v \in \calH$. We can rewrite it as $\nabla_v T = 0$ for any $v \in \calH$ since $\nabla$ is compatible with $g$. Define $\calH^2 = \calH + [\calH , \calH]$ and let $X_1, X_2, X_3 \in \Gamma(\calH)$ be arbitrary. We first see that
$$T(X_2, X_3) = \nabla_{X_2} X_3 - \nabla_{X_3} X_2 - [X_2,X_3] =0 \mod \calH^2,$$
since $\nabla$ preserves $\calH$. Furthermore, by the definition of the Bott connection
\begin{align*}
& [X_1, [X_2, X_3]] = - [X_1, T(X_2, X_3)] \mod \calH^2 = - \nabla_{X_1} T(X_2, X_3) \mod \calH^2 \\
& = -  T(\nabla_{X_1} X_2, X_3)  -  T(X_2, \nabla_{X_1} X_3) \mod \calH^2 = 0 \mod \calH^2.
\end{align*}
It follows that $\calH$ only generates $\calH^2$. As we assumed that $\calH$ is bracket generating, we have $\calH^2 = TM$.
\item Since both $\calH$ and $\calH^2 = \calH + [\calH, \calH] = TM$ have constant rank, it follows by \cite{Mit85, Bel96} that the tangent cone at a point $x$ is a Carnot group $G_x$. Its Lie algebra $\mathfrak{g}_x$ is given by
$$\mathfrak{g}_{x} = \mathfrak{g}_{x,1} \oplus \mathfrak{g}_{x,2} = \calH_x \oplus T_xM/\calH_x,$$
where $TM/\calH_x$ is the center, and for $X_x, Y_x \in \calH_x = \mathfrak{g}_{x,1}$ the Lie bracket is defined as
$$\Lbra X_x, Y_x \Rbra = [X,Y] |_x \mod \calH_x.$$
where $X, Y$ are any pair of vector fields extending this vectors. The Carnot group $G_x$ is then the corresponding simply connected Lie group of $\mathfrak{g}_x$ with the sub-Riemannian structure given by left translation of $\mathfrak{g}_x = \calH_x$ and its inner product.

If identify $\mathfrak{g}_x = \calH_x \oplus T_xM /\calH_x$ with $T_xM = \calH_x \oplus \calV_x$ through the map $v \mod \calH_x \mapsto \pi_{\calV_x}(v)$, $v \in T_xM$. Then the Lie bracket becomes,
$$\Lbra v, w \Rbra = - T(v,w), \qquad v, w\in T_xM.$$
Let now $y$ be any other point and let $\gamma:[0,1] \to \M$ be any horizontal curve from $x$ to $y$, which exists form out assumption that $\calH$ satisfies the bracket-generating condition. Then $\nabla_{\dot \gamma(t)} T =0$ for any $t \in [0,1]$, so if we write
$$\ptr_{\gamma,t} = \ptr_t : T_{x} \M \to T_{\gamma(t)} \M,$$ for the parallel transport map along $\gamma$, then this satisfies
$$\ptr_t T(u, v) = T(\ptr_t u,\ptr_t  v), \qquad v,w \in T_x\M.$$
As a consequence, $\ptr_1:\mathfrak{g}_x= T_x \M \to \mathfrak{g}_y =  T_y \M$ is a Lie algebra isomorphism, which can be integrated to a Lie group isomorphism from $G_x$ to $G_y$. Since the parallel transport $\ptr_1$ also maps $\calH_x$ onto $\calH_y$ isometrically, the induced map on Carnot groups is in fact a sub-Riemannian isometry.
\end{enumerate}
\end{proof}

%
%

\section{Horizontal McKean-Singer theorem   } \label{sec:Deformations}

We work on a totally geodesic foliation $(\M,\mathcal{F},g)$ and assume that there is some $0 < \ve < +\infty$ such that horizontal Laplacian $\Dhe$, is symmetric. From Proposition \ref{prop:Symmetric}, this assumption is equivalent to the fact that
$$(\nabla_v J)_w = -\frac{1}{2\ve} [J_v, J_w].$$

Since $\Dhe$ commutes with $d$ on smooth forms and is symmetric it also commutes on smooth forms with the coderivative $\delta_\ve$, and thus it also commutes with the Hodge-de Rham operator $\Delta_\ve:= -d\delta_\ve -\delta_\ve d$ on smooth forms. From Hodge theorem, the operator $\Delta_\ve$ is elliptic with a compact resolvent and the space of $L^2$-forms can be decomposed as $\oplus_{k=0}^{+\infty} E_{\lambda_k}$ where the $E_{\lambda_k}$'s are the eigenspaces of $\Delta_\ve$. Those eigenspaces only contain smooth forms, therefore $\Dhe (E_{\lambda_k}) \subset E_{\lambda_k}$. This implies that $\Dhe$ is essentially self-adjoint and generates the semigroup:
\begin{align}\label{semi 1}
e^{t \Dhe }=\oplus_{k=0}^{+\infty} e^{t \Dhe \mid E_{\lambda_k}}
 \end{align}
 By hypoellipticity (see \cite[Lemma 4.9]{BaGr19}), this semigroup has a smooth kernel $p_{\mathcal{H}, \ve} (t, x,y)$ and is a bounded  trace class operator in $ L_\mu^2 (\wedge^\cdot \M, g_\ve)$. Let us denote by $E^+_0(\Dhe)$ (resp. $E^-_0(\Dhe)$) the space of harmonic even forms for $\Dhe$ (resp. the space of harmonic odd forms for $\Dhe$).

 The goal of the section is to prove the following theorem, which is an analogue for our horizontal Laplacian of the classical McKean-Singer formula found in \cite{MR217739} :
 
 \begin{theorem}[Horizontal McKean-Singer formula]\label{PG result}
For every $t >0$,
 \begin{align*}
 \mathbf{Str} ( e^{t \Dhe }):& = \int_\M \mathbf{Tr} (p^{+}_{\mathcal{H},\ve} (t,x,x) ) d\mu(x) - \int_\M \mathbf{Tr} (p^{-}_{\mathcal{H},\ve} (t,x,x) ) d\mu(x) \\
  &=\dim E^+_0(\Dhe) -\dim E^-_0(\Dhe) \\
  &=\chi (\mathbb M)
 \end{align*}
 where $\chi (\mathbb M)$ is the Euler characteristic of $\M$.
 \end{theorem}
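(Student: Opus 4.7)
The plan is to exploit the joint spectral decomposition arising from $[\Dhe,\Delta_\ve]=0$, which is already established in the setup together with the semigroup splitting \eqref{semi 1}. Write $L^2(\wedge^{\cdot}\M,g_\ve)=\bigoplus_k E_{\lambda_k}$ for the $\Delta_\ve$-eigenspace decomposition; each $E_{\lambda_k}$ is finite-dimensional, consists of smooth forms by elliptic regularity, and is $\Dhe$-invariant, so
\[
\mathbf{Str}\bigl(e^{t\Dhe}\bigr)=\sum_k\mathbf{Str}\bigl(e^{t\Dhe}|_{E_{\lambda_k}}\bigr).
\]
I would prove the theorem in two moves: (i) the supertrace vanishes on every $E_{\lambda_k}$ with $\lambda_k\neq 0$, and (ii) the restriction of the supertrace to $\ker\Delta_\ve$ computes $\chi(\M)$.

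For (i), on each positive eigenspace the standard Hodge decomposition yields $E_{\lambda_k}=(E_{\lambda_k}\cap\image d)\oplus(E_{\lambda_k}\cap\image \delta_\ve)$, with $d$ restricting to a degree-shifting isomorphism from the $\image\delta_\ve$-summand onto the $\image d$-summand (inverse given by $\delta_\ve/\lambda_k$). Since $\Dhe$ commutes with both $d$ and $\delta_\ve$, this isomorphism intertwines the action of $e^{t\Dhe}$, and the classical chain-complex telescoping produces $\mathbf{Str}(e^{t\Dhe}|_{E_{\lambda_k}})=0$.

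For (ii), the key lemma is that $\Dhe$ annihilates $\ker\Delta_\ve$. If $\alpha\in\ker\Delta_\ve\cap\Omega^k$, then $d\alpha=0$ and $\delta_\ve\alpha=0$; writing $\Dhe=-d\delta_{\calH,\ve}-\delta_{\calH,\ve}d$ and pairing with $\alpha$ in $g_\ve$ gives
\[
\langle\Dhe\alpha,\alpha\rangle_{g_\ve}=-\langle\delta_{\calH,\ve}\alpha,\delta_\ve\alpha\rangle_{g_\ve}-\langle d\alpha,\delta_{\calH,\ve}^{*}\alpha\rangle_{g_\ve}=0.
\]
Because $\Dhe$ is symmetric and preserves the finite-dimensional space $\ker\Delta_\ve\cap\Omega^k$, polarization of this identically-zero quadratic form forces $\Dhe|_{\ker\Delta_\ve}=0$. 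Hence $e^{t\Dhe}$ acts as the identity on $\ker\Delta_\ve$ and, by Hodge's theorem, $\mathbf{Str}(e^{t\Dhe}|_{\ker\Delta_\ve})=\sum_k(-1)^k\dim H^k(\M,\mathbb{R})=\chi(\M)$. Combining (i) and (ii) gives the middle equality $\mathbf{Str}(e^{t\Dhe})=\chi(\M)$ for every $t>0$.

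The remaining equality $\dim E^+_0(\Dhe)-\dim E^-_0(\Dhe)=\chi(\M)$ follows by running essentially the same Hodge-telescoping argument on $\ker\Dhe\cap E_{\lambda_k}$: because $d$ and $\delta_\ve$ preserve $\ker\Dhe$ (they commute with $\Dhe$), the dimensions in positive $\Delta_\ve$-eigenspaces cancel in pairs, while the key lemma identifies $\ker\Dhe\cap\ker\Delta_\ve$ with the full space of $\Delta_\ve$-harmonic forms, whose supertrace of dimensions is $\chi(\M)$. The main obstacle is precisely this key lemma: unlike in the classical elliptic setting where the analogous horizontal Laplacian would coincide with the Hodge Laplacian, here $\Dhe$ is built from the horizontal codifferential $\delta_{\calH,\ve}$ rather than $\delta_\ve$, so its vanishing on harmonic forms is not automatic, and the symmetry condition \eqref{SymmetryCond} is the indispensable input that lets the polarization argument close.
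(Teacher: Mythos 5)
Your proposal is correct, and it takes a genuinely different route from the paper. The paper's proof is a deformation (homotopy) argument: it introduces the interpolating family $\square_{\ve,\theta}=(1-\theta)\Dhe-\theta\mathbf{D}_\ve^2$, shows via the Dirac operator $\mathbf{D}_\ve$ that the supertrace of $e^{t\square_{\ve,\theta}}$ equals the integer $\dim E_0^+(\square_{\ve,\theta})-\dim E_0^-(\square_{\ve,\theta})$, then uses continuity in $\theta$ to conclude this integer is constant and thus coincides with the classical $\theta=1$ value $\chi(\M)$. You instead work directly with $\Dhe$ on the fixed $\Delta_\ve$-eigenspace decomposition and prove the crisp structural fact that $\Dhe$ annihilates $\ker\Delta_\ve$, then use Hodge telescoping on the nonzero eigenspaces. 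What each approach buys: the paper's deformation argument avoids having to analyze $\Dhe$ on $\ker\Delta_\ve$ at all, at the cost of introducing an auxiliary operator and a continuity lemma for the kernels; your argument is shorter and more structural, and it delivers the stronger statement that $\ker\Delta_\ve\subset\ker\Dhe$, which the paper's proof does not assert. Interestingly, both proofs lean on the very same computational trick at the crucial moment, namely the vanishing of $\langle d\delta_{\calH,\ve}\alpha,\alpha\rangle_{g_\ve}=\langle\delta_{\calH,\ve}\alpha,\delta_\ve\alpha\rangle_{g_\ve}$ when $\delta_\ve\alpha=0$ -- the paper uses it to prove injectivity of $\mathbf{D}_\ve$ on $E_\lambda^+(\square_{\ve,\theta})$, you use it together with polarization (which is exactly where the symmetry of $\Dhe$ is indispensable) to kill $\Dhe$ on harmonic forms. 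One small caveat worth stating explicitly if you write this up: the Hodge-telescoping in step (i) requires checking that $\Dhe$ preserves each summand $E_{\lambda_k}\cap\image d$ and $E_{\lambda_k}\cap\image\delta_\ve$ (which follows since $\Dhe$ commutes with $d$, $\delta_\ve$, and $\Delta_\ve$), and that the degree-shifting isomorphism $d$ swaps parity while intertwining $e^{t\Dhe}$, so that the even and odd traces cancel rather than reducing to a scalar computation as in the classical case where $\Dhe$ would act by the scalar $\lambda_k$.
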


We turn to the proof of Theorem \ref{PG result}. We denote by 
\[
\mathbf D_\ve=d+\delta_\ve
\]
the Dirac operator of the metric $g_\ve$. Observe that $\mathbf D_\ve$ commutes with $\Dhe$ since both $d$ and $\delta_\ve$ commute with it.  The main idea to prove Theorem \ref{PG result} is to introduce a deformation of $\Dhe $ as follows: 
\[
\square_{\ve, \theta}= (1-\theta)\Dhe -\theta \mathbf  D_\ve^2 , \quad \theta \in [0,1].
\]

A first lemma is the following:

\begin{lemma}
Let $\lambda$ be a non-zero eigenvalue of $\square_{\ve, \theta}$. Then $\mathbf{D}_\ve : E^+_\lambda(\square_{\ve, \theta}) \to  E^-_\lambda (\square_{\ve, \theta})$ is an isomorphism. Therefore, $\dim E^+_\lambda(\square_{\ve, \theta}) =\dim E^-_\lambda (\square_{\ve, \theta})$.
\end{lemma}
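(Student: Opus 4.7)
The plan is to exploit the super-symmetry: $\mathbf{D}_\ve$ commutes with $\square_{\ve,\theta}$ and reverses parity, and when $\lambda \neq 0$ its kernel on $E^{\pm}_\lambda$ must be trivial by a Hodge-type vanishing. First, since the excerpt notes $[\mathbf{D}_\ve, \Dhe] = 0$, and $[\mathbf{D}_\ve, \mathbf{D}_\ve^2] = 0$ is automatic, $\mathbf{D}_\ve$ commutes with $\square_{\ve,\theta} = (1-\theta)\Dhe - \theta\mathbf{D}_\ve^2$ and hence preserves each eigenspace $E_\lambda(\square_{\ve,\theta})$. Because $\mathbf{D}_\ve = d + \delta_\ve$ shifts form degree by $\pm 1$, it exchanges even and odd forms, so it restricts to well-defined maps $\mathbf{D}_\ve : E^{\pm}_\lambda(\square_{\ve,\theta}) \to E^{\mp}_\lambda(\square_{\ve,\theta})$.

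For injectivity when $\lambda \neq 0$, I would assume $\omega \in E^+_\lambda$ with $\mathbf{D}_\ve \omega = 0$ and aim for a contradiction. Using self-adjointness of $\mathbf{D}_\ve$ together with $\mathbf{D}_\ve^2 = d\delta_\ve + \delta_\ve d$, the identity $\|\mathbf{D}_\ve \omega\|^2 = \langle \mathbf{D}_\ve^2 \omega, \omega\rangle = \|d\omega\|^2 + \|\delta_\ve \omega\|^2$ forces $d\omega = 0 = \delta_\ve \omega$, i.e., $\omega$ is Hodge harmonic. Next, expanding $\Dhe = -d\delta_{\calH,\ve} - \delta_{\calH,\ve}d$ and integrating by parts, using that the genuine $L^2(g_\ve)$-adjoint of $d$ is $\delta_\ve$, one computes
\[
\langle \Dhe \omega, \omega\rangle_{g_\ve} \;=\; -\langle \delta_{\calH,\ve}\omega, \delta_\ve \omega\rangle_{g_\ve} \;-\; \langle d\omega, \delta_{\calH,\ve}^{*}\omega\rangle_{g_\ve} \;=\; 0.
\]
Since also $\mathbf{D}_\ve^2 \omega = 0$, the eigenvalue equation $\square_{\ve,\theta}\omega = \lambda \omega$ reduces to $(1-\theta)\Dhe \omega = \lambda \omega$; pairing with $\omega$ gives $\lambda \|\omega\|^2 = 0$, so $\lambda = 0$, contradicting the hypothesis. (The case $\theta = 1$ is immediate, since then $\square_{\ve,1}\omega = -\mathbf{D}_\ve^2 \omega = 0$ already forces $\lambda = 0$.)

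An identical argument with the roles of even and odd swapped gives injectivity of $\mathbf{D}_\ve : E^-_\lambda \to E^+_\lambda$. Since $\square_{\ve,\theta}$ preserves the finite-dimensional eigenspaces of the elliptic, compact-resolvent operator $\Delta_\ve$ recalled in the excerpt, each $E_\lambda(\square_{\ve,\theta})$ is itself finite-dimensional; mutual injectivity therefore forces $\dim E^+_\lambda = \dim E^-_\lambda$, and both restrictions are isomorphisms. The main delicate step is the vanishing $\langle \Dhe \omega, \omega\rangle_{g_\ve} = 0$ for Hodge-harmonic $\omega$: it succeeds precisely because the Hodge codifferential $\delta_\ve$, not the sub-Riemannian $\delta_{\calH,\ve}$, is the true $L^2(g_\ve)$-adjoint of $d$, so the two conditions $d\omega = 0$ and $\delta_\ve \omega = 0$ neatly annihilate both boundary terms arising from $\Dhe = -d\delta_{\calH,\ve} - \delta_{\calH,\ve}d$.
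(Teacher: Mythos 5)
Your injectivity argument is essentially the paper's: you derive $d\omega=0$ and $\delta_\ve\omega=0$ from $\mathbf{D}_\ve\omega=0$, then exploit $\Dhe=-d\delta_{\calH,\ve}-\delta_{\calH,\ve}d$ together with these two vanishing conditions to kill both terms (the paper instead writes $\alpha=\frac{1-\theta}{\lambda}\Dhe\alpha=-\frac{1-\theta}{\lambda}d\delta_{\calH,\ve}\alpha$ and pairs against $\alpha$, but this is the same computation). For bijectivity you use mutual injectivity plus finite-dimensionality, whereas the paper proves surjectivity directly by taking $\alpha\in E^-_\lambda$ orthogonal to $\mathbf{D}_\ve E^+_\lambda$ and showing $\mathbf{D}_\ve\alpha=0$; both are fine once you know $E_\lambda$ is finite-dimensional. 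One small soft spot: your justification of finite-dimensionality (``$\square_{\ve,\theta}$ preserves the finite-dimensional eigenspaces of $\Delta_\ve$, hence $E_\lambda(\square_{\ve,\theta})$ is finite-dimensional'') is not by itself a valid deduction — a $\lambda$-eigenspace could a priori meet infinitely many $E_\mu(\Delta_\ve)$'s. The paper instead gets this from the compactness of $e^{t\square_{\ve,\theta}}=e^{t(1-\theta)\Dhe}e^{-t\theta\mathbf{D}_\ve^2}$, which follows from the hypoelliptic trace-class estimates of \cite{BaGr19}. You should either cite that, or add the short argument that for $\theta>0$ the restriction $\square_{\ve,\theta}|_{E_\mu}=(1-\theta)\Dhe|_{E_\mu}+\theta\mu\,\mathrm{id}$ together with the spectral bound on $\Dhe$ forces only finitely many $\mu$ to contribute to a given $\lambda$-eigenspace (and handle $\theta=0$ via the compactness of $e^{t\Dhe}$).
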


\begin{proof}
Let $\lambda$ be a non-zero eigenvalue of $\square_{\ve, \theta}$. The corresponding eigenspace  $E_\lambda(\square_{\ve, \theta})$ is finite-dimensional since $e^{t \square_{\ve, \theta} }$ is a compact operator for $t>0$. Moreover, since $\mathbf D_\ve$ commutes with $\square_{\ve, \theta}$, $\mathbf{D}_\ve : E^+_\lambda(\square_{\ve, \theta}) \to E^-_\lambda (\square_{\ve, \theta})$ is well defined. Let now $\alpha \in E^+_\lambda(\square_{\ve, \theta}) $ such that $\mathbf{D}_{\ve} \alpha=0$. One has then
\[
d\alpha =-\delta_\ve \alpha.
\]
This implies that 
\[
\| d\alpha\|^2_{ L^2 (\wedge^\cdot \M, g_\ve)}= -\langle d\alpha, \delta_\ve \alpha \rangle_{ L^2 (\wedge^\cdot \M, g_\ve)}=0,
\]
so $d\alpha=0$.
Similarly, one has $\|\delta_\ve \alpha \|^2_{ L^2 (\wedge^\cdot \M, g_\ve)}=0,$ so $\delta_\ve \alpha =0$. Therefore, 
\[
\alpha=\frac{1-\theta}{\lambda} \Dhe \alpha=-\frac{1-\theta}{\lambda} (d \delta_{\mathcal{H},\ve} + \delta_{\mathcal{H},\ve} d) \alpha=-\frac{1-\theta}{\lambda} d \delta_{\mathcal{H},\ve}\alpha.
\]
One deduces
\[
\| \alpha\|^2_{ L^2 (\wedge^\cdot \M, g_\ve)}=-\frac{1-\theta}{\lambda} \langle \alpha , d \delta_{\mathcal{H},\ve}\alpha \rangle_{ L^2 (\wedge^\cdot \M, g_\ve)}=-\frac{1-\theta}{\lambda} \langle \delta_\ve \alpha ,  \delta_{\mathcal{H},\ve}\alpha \rangle_{ L^2 (\wedge^\cdot \M, g_\ve)}=0.
\]
As a consequence, $\mathbf{D}_\ve : E^+_\lambda(\square_{\ve, \theta}) \to  E^-_\lambda (\square_{\ve, \theta})$ is injective. Let us now prove that it is surjective. Let $\alpha \in E^-_\lambda (\square_{\ve, \theta})$ which is orthogonal to the space $\mathbf{D}_\ve E^+_\lambda(\square_{\ve, \theta})$. For every $\omega \in E^+_\lambda(\square_{\ve, \theta})$, one has
\[
0=\langle \alpha , \mathbf{D}_\ve \omega \rangle_{ L^2 (\wedge^\cdot \M, g_\ve)}=\langle \mathbf{D}_\ve \alpha ,  \omega \rangle_{ L^2 (\wedge^\cdot \M, g_\ve)}.
\]
Thus, $\mathbf{D}_\ve \alpha=0$ and from the first part of the proof, we deduce that $\alpha=0$. We conclude that $\mathbf{D}_\ve : E^+_\lambda(\square_{\ve, \theta}) \to E^-_\lambda (\square_{\ve, \theta})$ is indeed an isomorphism.
\end{proof}

A second lemma is the following:
\begin{lemma}
For every $t>0$, the map $\theta \to \mathbf{Str} ( e^{t \square_{\ve, \theta} })$ is continuous on $[0,1]$.
\end{lemma}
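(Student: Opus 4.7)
The plan is to reduce $\mathbf{Str}(e^{t \square_{\ve,\theta}})$ to a supertrace on the finite-dimensional space of $\Delta_\ve$-harmonic forms, where the dependence on $\theta$ is manifestly analytic. Since $\Dhe$ commutes with both $d$ and $\delta_\ve$, it commutes with $\Delta_\ve$ and $\mathbf{D}_\ve$, so the Hodge spectral decomposition $L^2(\wedge^\cdot \M, g_\ve) = \bigoplus_{k=0}^{\infty} E_{\lambda_k}$ is preserved by every $\square_{\ve,\theta}$ as well as by $\mathbf{D}_\ve$. On the (finite-dimensional) eigenspace $E_{\lambda_k}$, the deformed operator acts as the affine family $\square_{\ve,\theta}|_{E_{\lambda_k}} = (1-\theta)\Dhe|_{E_{\lambda_k}} + \theta \lambda_k \, \mathrm{Id}$.

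The key step is to show $\mathbf{Str}(e^{t\square_{\ve,\theta}}|_{E_{\lambda_k}}) = 0$ whenever $\lambda_k \neq 0$. Since $\Dhe|_{E_{\lambda_k}}$ is symmetric on a finite-dimensional inner product space, it diagonalizes with real eigenspaces $F_\mu \subset E_{\lambda_k}$. The odd operator $\mathbf{D}_\ve$ commutes with $\Dhe$ and satisfies $\mathbf{D}_\ve^2 = -\lambda_k \, \mathrm{Id}$ on $E_{\lambda_k}$, hence it preserves each $F_\mu$ and is invertible there; arguing exactly as in the preceding lemma one obtains $\dim F_\mu^+ = \dim F_\mu^-$, whence
$$\mathbf{Str}\bigl(e^{t\square_{\ve,\theta}}|_{E_{\lambda_k}}\bigr) = \sum_\mu e^{t((1-\theta)\mu + \theta \lambda_k)}\bigl(\dim F_\mu^+ - \dim F_\mu^-\bigr) = 0.$$

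The trace class property of $e^{t\square_{\ve,\theta}}$ (elliptic for $\theta \in (0,1]$, hypoelliptic for $\theta = 0$ via \cite[Lemma 4.9]{BaGr19}) then legitimates the termwise identity
$$\mathbf{Str}\bigl(e^{t\square_{\ve,\theta}}\bigr) = \sum_k \mathbf{Str}\bigl(e^{t\square_{\ve,\theta}}|_{E_{\lambda_k}}\bigr) = \mathbf{Str}\bigl(e^{t(1-\theta)\Dhe|_{E_0}}\bigr),$$
so the whole computation collapses onto the finite-dimensional space $E_0$ of $\Delta_\ve$-harmonic forms, on which $\square_{\ve,\theta}$ reduces to $(1-\theta)\Dhe|_{E_0}$. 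The right-hand side is the supertrace of the exponential of an affine family of matrices on a finite-dimensional space, which is analytic, and in particular continuous, in $\theta \in [0,1]$. The main delicate point of this plan is the uniform control of the trace-class property of $e^{t\square_{\ve,\theta}}$ across $\theta \in [0,1]$ so that the supertrace decomposes termwise; both the ellipticity of $\square_{\ve,\theta}$ for $\theta >0$ and the hypoellipticity of $\Dhe$ at $\theta=0$ are needed to handle this uniformly.
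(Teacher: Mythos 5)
Your proposal is correct, but it follows a genuinely different route from the paper's. The paper exploits the commutativity of $\Dhe$ and $-\mathbf{D}_\ve^2$ to factorize $e^{t\square_{\ve,\theta}} = e^{t(1-\theta)\Dhe}\,e^{-t\theta\mathbf{D}_\ve^2}$, writes the kernel as a convolution $q_{\ve,\theta}(t,x,y) = \int_\M p_{\calH,\ve}(t(1-\theta),x,z)\,p_\ve(t\theta,z,y)\,dz$, and reads off continuity of $\mathbf{Str}(e^{t\square_{\ve,\theta}}) = \int_\M q_{\ve,\theta}(t,x,x)\,dx$ directly from the obvious continuity of this convolution in $\theta$; it is a short, soft, kernel-based argument. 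You instead push the spectral picture further: using the double decomposition by $\Delta_\ve$-eigenspaces $E_{\lambda_k}$ and then by $\Dhe$-eigenspaces $F_\mu$ inside each $E_{\lambda_k}$, and the fact that $\mathbf{D}_\ve^2 = -\lambda_k\,\mathrm{Id}$ is invertible on $E_{\lambda_k}$ when $\lambda_k \neq 0$, you get the odd isomorphism $\mathbf{D}_\ve : F_\mu^+ \to F_\mu^-$ and hence that the whole supertrace collapses onto $E_0$. This buys more than the statement requires: your argument yields analyticity (not merely continuity) in $\theta$, and it already shows that only $\Delta_\ve$-harmonic forms contribute, effectively performing part of the McKean--Singer cancellation a step early. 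The trace-class worry you flag at the end is in fact harmless and needs no delicate uniformity argument: for any $\theta \in [0,1]$ at least one of the two factors $e^{t(1-\theta)\Dhe}$, $e^{-t\theta\mathbf{D}_\ve^2}$ is trace class (by the hypoellipticity of $\Dhe$ for $\theta < 1$, by ellipticity of $\Delta_\ve$ for $\theta > 0$) while the other is bounded, so $e^{t\square_{\ve,\theta}}$ is trace class for every $\theta$ and the termwise spectral decomposition of the supertrace is legitimate. Both proofs are sound; yours is more structural and gives a slightly stronger conclusion, the paper's is shorter and more directly in the heat-kernel spirit of the rest of the section.
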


\begin{proof}
Let $q_{\ve,\theta}(t,x,y)$ be the heat kernel of $\square_{\ve, \theta}=(1-\theta) \Dhe  - \theta \mathbf{D}^2_\ve$, $p_{\mathcal{H},\ve} (t,x,y)$ be the heat kernel of $\Dhe$ and $p_\ve (t,x,y)$ be the heat kernel of $-\mathbf{D}^2_\ve$. Since $-\mathbf{D}^2_\ve$ and $\Dhe$ commute, we have 

\[
e^{t\square_{\ve, \theta} }= e^{t(1-\theta) \Dhe } e^{-t\theta \mathbf{D}^2_\ve}.
\]
Therefore:
\[
q_{\ve,\theta}(t,x,y) =\int_\M  p_{\mathcal{H},\ve} (t(1-\theta),x,z) p_\ve (t \theta ,z,y) dz
\]
and the result easily follows since
\[
\mathbf{Str} ( e^{t \square_{\ve, \theta} })=\int_\M q_{\ve,\theta}(t,x,x) dx.
\]
\end{proof}

We are now ready for the proof of Theorem  \ref{PG result}.

\begin{proof}
From the first lemma:
\begin{align*}
 &  \mathbf{Str} ( e^{t \square_{\ve, \theta} }) \\
 = &\dim E^+_0(\square_{\ve, \theta}) -\dim E^-_0(\square_{\ve, \theta})+ \sum_{ \lambda \neq 0}( \dim E^+_\lambda(\square_{\ve, \theta}) -\dim E^-_\lambda(\square_{\ve, \theta})) e^{\lambda t}  \\
   =& \dim E^+_0(\square_{\ve, \theta}) -\dim E^-_0(\square_{\ve, \theta}).
\end{align*}

Therefore $\mathbf{Str} ( e^{t \square_{\ve, \theta} }) \in \mathbb{Z}$.  From the second lemma $\theta \to \mathbf{Str} ( e^{t \square_{\ve, \theta} })$ is continuous, thus constant. We deduce
\[
\mathbf{Str} ( e^{t \square_{\ve, 0} }) =\mathbf{Str} ( e^{t \square_{\ve, 1} }).
\]
Since $\square_{\ves, 1} = -\mathbf D_\ve^2$ is the Hodge-de Rham Laplacian of the Riemannian manifold $(\M, g_\ve)$, from the usual Riemannian Hodge theory (see \cite{MR217739}), we have
\[
\mathbf{Str} ( e^{t \square_{\ve, 1} })=\chi (\M),
\]
which concludes the proof.
\end{proof}

%
%
%
%

\section{Horizontal Chern-Gauss-Bonnet formula}

As before, we consider the horizontal Laplacian 
\[
\Dhe=-d \delta_{\ch,\ve}-\delta_{\ch,\ve} d,
\]
and assume that it is symmetric for a fixed $\ve$. As seen earlier, $\Dhe$ satisfies the Weitzenb\"ock identity 
\begin{equation}\label{WIdentity}
\Dhe= L_{\calH,\ve} - \scrR_\ve=-(\nabla_\mathcal{H}^\ve)^* \nabla_\mathcal{H}^\ve -\scrR_\ve.
\end{equation}
where the later equality follows from \cite[Lemma~2.1]{GrTh19}. The goal of the section is to compute the pointwise limit
\[
\lim_{t \to 0}   \mathbf{Str}\text{ } (p_{\mathcal{H},\ve} (t,x,x) )
\]
and deduce from it our horizontal Chern-Gauss-Bonnet formula. The computation of that limit will be based on the probabilist method of Brownian Chen series (see \cite{index, ESAIM}) which has the advantage of being easily adapted to subelliptic operators like $\Dhe$, see \cite{bismutCR}. For convenience and to introduce notation, we include in Appendix A.2 the main elements of that theory. 


A first step to implement the method in \cite{bismutCR} is to study the small-time heat kernel asymptotics of a  diffusion tangent to the scalar horizontal Laplacian $\Delta_{\mathcal{H}}$ . Since we assume that $\Dhe$ is symmetric, from Corollary \ref{cor:SurjV} one has $T\M=\mathcal{H} + [\mathcal{H},\mathcal{H}]$, and thus the tangent diffusion will take its values in a two-step Carnot group (the so-called tangent cone, see Corollary \ref{cor:SurjV}(b)) for which an explicit formula for the heat kernel is known (see \cite{cygan,garofalo}).  In a local horizontal frame $\{ X_1,\cdots,X_n \}$ around $x_0$ write
\[V_t(x_0) = \sum_{i=1}^n \sqrt{2} X_i (x_0) B^i_t + \sum_{1 \le i <j \le n} \pi_{\mathcal{V}}( [X_i,X_j] (x_0)) \int_0^t B^i_s dB^j_s -B^j_s dB^i_s,\]
where $(B_t)_{t \ge 0}$ is a Brownian motion in $\mathbb{R}^n$. We note that $V_t(x_0)$ can be written in a basis free way as
\[
 \sqrt{2} B_t(x_0) - \int_0^t T(B_s(x_0), dB_s(x_0))
 \]
 where $B_t(x_0)= \sum_{i=1}^n X_i (x_0) B^i_t $ is a standard Brownian motion in $\calH_{x_0}$.
%

\begin{lemma}\label{densityLemma}
Let $x_0 \in \M$.  For $t>0$, let $d_t (x_0)$ be the density at 0 of the  $T_{x_0} \M$ valued random variable $V_t(x_0)$. Then, when $t \to 0$,
\[
d_t (x_0) \sim \frac{2^m}{(4\pi t)^{\frac{n}{2}+m} } \int_{\mathcal{V}_{x_0}} \det \left( \frac{\sqrt{J_z^* J_z}}{\sinh \sqrt{J_z^* J_z}}\right)^{1/2} dz.
\]
\end{lemma}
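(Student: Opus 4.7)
The key observation is that $V_t(x_0)$ is \emph{exactly} the Brownian motion at time $t$ on the tangent cone Carnot group $G_{x_0}$ from Corollary~\ref{cor:SurjV}(b), started at the identity, whose generator matches $\Delta_{\calH}$. Indeed, using exponential coordinates $\mathfrak g_{x_0} = \calH_{x_0} \oplus \calV_{x_0}$ (polynomial for a $2$-step nilpotent group), the expression given in the text is the standard stochastic-differential representation of this Brownian motion with bracket $-T(\cdot,\cdot)$. By the scaling properties of the Carnot group its density at $0$ is computable in closed form, so the ``$\sim$'' in the statement is in fact an equality, and the $t \to 0$ language merely reads off the prefactor.

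\textbf{Step 1 (partial Fourier transform).} Decompose $V_t = V_t^{\calH} + V_t^{\calV}$ with $V_t^{\calH} = \sqrt 2\, B_t(x_0)$ and $V_t^{\calV} = -\int_0^t T(B_s(x_0), dB_s(x_0))$. Identifying $\calV_{x_0}^* \cong \calV_{x_0}$ via $g$, Fourier inversion in the vertical factor gives
\[
d_t(x_0) = \frac{1}{(2\pi)^m}\int_{\calV_{x_0}} \mathbb E\Bigl[e^{-i\langle z,\, V_t^{\calV}\rangle}\,\delta_0\bigl(V_t^{\calH}\bigr)\Bigr]\, dz.
\]
By \eqref{Jmap}, $\langle z, T(u,w)\rangle_g = \langle J_z u, w\rangle_g$, so the exponent becomes the Lévy-area type expression $i \int_0^t \langle J_z B_s, dB_s\rangle$.

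\textbf{Step 2 (Mehler / L\'evy area).} Since $T$ takes values in $\calV$, the map $J_z$ is skew-symmetric on $\calH_{x_0}$ and decomposes $\calH_{x_0}$ into an orthogonal sum of invariant $2$-planes on which $J_z$ acts with eigenvalues $\pm i\mu_j$ (plus a possible kernel contributing only a Gaussian factor). On each block the relevant quantity is a quadratic Wiener functional whose Laplace transform is given by the classical Mehler / Cameron--Martin formula; multiplying across the blocks and incorporating the endpoint constraint $\delta_0(\sqrt{2}\, B_t)$ produces
\[
\mathbb E\Bigl[e^{i\int_0^t \langle J_z B_s, dB_s\rangle}\, \delta_0\bigl(\sqrt 2\, B_t\bigr)\Bigr] = \frac{1}{(4\pi t)^{n/2}} \det\!\left(\frac{(t/2)\sqrt{J_z^* J_z}}{\sinh\bigl((t/2)\sqrt{J_z^* J_z}\bigr)}\right)^{1/2}.
\]
Then, substitute $z \mapsto (2/t) z$ in the vertical integral: since $J$ is linear in $z$ this turns $(t/2)\sqrt{J_z^* J_z}$ into $\sqrt{J_z^* J_z}$, and the Jacobian $(2/t)^m$ combines with the prefactors $(2\pi)^{-m}(4\pi t)^{-n/2}$ to give $2^m (4\pi t)^{-n/2 - m}$, which is precisely the constant in the statement.

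\textbf{Main obstacle.} The conceptual steps---identifying $V_t$ with the Carnot-group Brownian motion, partial Fourier transform, and the final rescaling---are routine. The real work is in Step~2, namely the careful bookkeeping of factors of $2$ and $\pi$ in the Mehler formula in the presence of the endpoint constraint $\delta_0(\sqrt{2}\, B_t)$. This is essentially the content of the classical heat-kernel formula for the sub-Laplacian on step-$2$ Carnot groups found in \cite{cygan, garofalo}, to which we may appeal for the final determinantal identity.
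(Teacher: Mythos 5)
Your proof takes exactly the same route as the paper's: identify $V_t(x_0)$ with horizontal Brownian motion on the two-step Carnot tangent cone $G_{x_0}$ of Corollary~\ref{cor:SurjV}(b) and then invoke the explicit heat-kernel formula for the sub-Laplacian on step-$2$ Carnot groups due to Cygan and Garofalo. The paper's proof simply cites that formula; you spell out the standard derivation behind it via partial Fourier transform in the vertical variable and the L\'evy area / Mehler identity.

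However, Step~2 as written contains a factor-of-$2$ slip, and the arithmetic in your last sentence does not close. Since $\delta_0(\sqrt 2\,B_t) = 2^{-n/2}\delta_0(B_t)$, and on each $2$-plane on which $J_z$ has singular value $a$ the Brownian-bridge L\'evy area identity reads $\mathbb{E}\bigl[e^{ia\int_0^t(B^1\,dB^2-B^2\,dB^1)}\mid B_t=0\bigr]=\tfrac{at}{\sinh(at)}$, the correct identity is
\[
\mathbb E\Bigl[e^{i\int_0^t \langle J_z B_s, dB_s\rangle}\, \delta_0\bigl(\sqrt 2\, B_t\bigr)\Bigr] = \frac{1}{(4\pi t)^{n/2}} \det\!\left(\frac{t\sqrt{J_z^* J_z}}{\sinh\bigl(t\sqrt{J_z^* J_z}\bigr)}\right)^{1/2},
\]
with $t$ rather than $t/2$ inside the determinant. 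The change of variables should therefore be $z\mapsto z/t$, with Jacobian $t^{-m}$, and since $t^{-m}(2\pi)^{-m}=2^m(4\pi t)^{-m}$ this produces the stated constant $2^m(4\pi t)^{-n/2-m}$. As you have written it, the substitution $z\mapsto(2/t)z$ with Jacobian $(2/t)^m$ yields $(2/t)^m(2\pi)^{-m}(4\pi t)^{-n/2}=4^m(4\pi t)^{-n/2-m}$, not $2^m(4\pi t)^{-n/2-m}$. This is precisely the ``careful bookkeeping'' you flag as the main obstacle, and it needs to be corrected for the argument to reproduce the lemma.
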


\begin{proof}
The process $(V_t(x_0))_{t \ge 0}$ is the horizontal Brownian motion in the tangent cone $G_{x_0}$ which is a 2-step Carnot group when it is identified with $T_{x_0}\M$ using the group exponential map. The heat kernel of the horizontal Laplacian is known explicitly in 2-step Carnot groups (see \cite{cygan,garofalo}) which yields the small-time asymptotics.
\end{proof}

\begin{remark}\label{densityRemark}
We note that $d_t(x_0)$ is independent of $x_0$ because of Corollary \ref{cor:SurjV}(b).
\end{remark}


In the sequel, we will use the notation $\mathcal{F}_I$ (defined with respect to the connection $D=\nabla^{\ve}$) and  $\Lambda_I(B)_t$, as introduced and discussed in appendix A.2.

\begin{corollary}\label{BCCorrolary}

It will hold that as $t \rightarrow 0$
\[
\mathbf{Str} (  p_{\mathcal{H},\ve}(t,x_0,x_0)) \sim d_t(x_0)  \mathbb{E}\left( \left.  \mathbf{Str} \left( \exp \left(\sum_{I,d(I) \leq n+2m} \Lambda_I(B)_t \mathcal{F}_I\right)(x_0) \right)\right\vert B_1= 0 \right) 
 \]
where $d_t(x_0)$ is the density at 0 of $V_t(x)$, as in Lemma \ref{densityLemma}.

\end{corollary}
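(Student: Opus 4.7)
The plan is to combine a Feynman--Kac representation of the hypoelliptic heat semigroup on forms with the Brownian Chen series expansion of its associated parallel transport, and then extract the supertrace asymptotics by conditioning the horizontal Brownian motion to return to its starting point. The structure follows the template of \cite{index,ESAIM,bismutCR}; the substance is that all of the ``metric'' information collapses into the density $d_t(x_0)$ of Lemma \ref{densityLemma}, while the ``bundle'' information is encoded in the iterated stochastic integrals $\Lambda_I(B)_t$ paired with the endomorphisms $\mathcal{F}_I$.

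First, I would use the Weitzenb\"ock identity \eqref{WIdentity} and the fact that $L_{\calH,\ve}$ is the connection horizontal Laplacian of $\nabla^\ve$ to write a Feynman--Kac--It\^o formula of the shape
\[
e^{t\Dhe}\alpha(x_0)=\mathbb{E}\!\left(\ptr_{0,t}^\ve\, M_t^\ve\,\alpha(X_t)\right),
\]
where $X_t$ is the diffusion starting at $x_0$ with generator $\Delta_\calH$, the parallel transport $\ptr_{0,t}^\ve$ is taken along $X$ with respect to $\nabla^\ve$, and $M_t^\ve$ is the multiplicative functional solving $dM_t^\ve=M_t^\ve\,(-\scrR_\ve)\,dt$ along the path (with initial value $\id$). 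Writing the diagonal heat kernel against the joint law of $(X_t,\mathrm{path})$ gives
\[
p_{\calH,\ve}(t,x_0,x_0)=d_t^{X}(x_0)\,\mathbb{E}\!\left(\ptr_{0,t}^\ve\,M_t^\ve\,\bigm|\,X_t=x_0\right),
\]
where $d_t^{X}(x_0)$ is the density at $x_0$ of $X_t$.

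Second, I would pass from the manifold-valued process $X$ to the tangent-space linearization $V_t(x_0)$ introduced just before Lemma \ref{densityLemma}. Using the standard rescaling under which $X_{t\cdot}$ converges, in the sub-Riemannian sense, to $V_{\cdot}(x_0)$, the density $d_t^{X}(x_0)$ is equivalent as $t\to 0$ to the tangent-cone density $d_t(x_0)$ computed in Lemma \ref{densityLemma}, and the conditioning $X_t=x_0$ becomes, in the limit, conditioning the Brownian motion $B$ by $B_1=0$ (after the Brownian rescaling built into the definition of $V_t$). Here one uses Corollary \ref{cor:SurjV}(a) which ensures that the tangent cone is a genuine step-two Carnot group so the linearization is non-degenerate.

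Third, I would expand $\ptr_{0,t}^\ve M_t^\ve$ via the Brownian Chen series of Appendix A.2: in an adapted horizontal frame, the solution of the linear Stratonovich SDE governing $\ptr_{0,t}^\ve M_t^\ve$ has a Magnus-type exponential expansion
\[
\ptr_{0,t}^\ve M_t^\ve\;=\;\exp\!\left(\sum_{I}\Lambda_I(B)_t\,\mathcal{F}_I\right)(x_0),
\]
where the $\mathcal{F}_I$ are iterated commutators of the frame vector fields together with the curvature/potential endomorphisms associated with $\nabla^\ve$ and $\scrR_\ve$, and $\Lambda_I(B)_t$ are the corresponding iterated Stratonovich integrals. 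Applying the supertrace $\mathbf{Str}$ and using that $\mathbf{Str}$ vanishes on elements of Fermionic (Clifford) degree strictly less than the top, one sees that only multi-indices $I$ with $d(I)\le n+2m$ can contribute non-trivially to the leading order; higher-degree terms vanish under $\mathbf{Str}$ or are of higher order in $t$ after the Brownian scaling. Truncating the series at $d(I)\le n+2m$ and combining with the previous step yields precisely the stated asymptotic.

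The main obstacle is the second step: justifying the passage to the tangent cone in the conditional expectation when the generator $\Delta_\calH$ is only hypoelliptic. This is where the argument of \cite{bismutCR} must be invoked carefully, since the usual elliptic Malliavin/Varadhan type arguments for the conditional law of a diffusion bridge do not apply off-the-shelf; one needs the non-degeneracy of the Malliavin covariance at $t=0$ in the bracket-generating sense, which is exactly guaranteed by Corollary \ref{cor:SurjV}(a) combined with Remark \ref{densityRemark}. All of the truncation/supertrace bookkeeping in step three is, by contrast, algebraic and routine once the probabilistic convergence is in hand.
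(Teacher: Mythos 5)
Your proposal reaches the right destination but takes a more laborious route than the paper, and two of the signposts along the way are mislabeled.

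The paper's proof is essentially one line: apply Theorem~\ref{BCTheorem} from Appendix A.2 with $N = n+2m$ (which is the homogeneous dimension $Q$ here, because $\calH$ is step~two by Corollary~\ref{cor:SurjV}(a)), and then observe that for $|I| > 2$ the iterated bracket $X_I$ is already a linear combination of the $X_i$ and $[X_j, X_k]$, so the density $d_t^N(x_0)$ and the conditioning $\sum_I \Lambda_I(B)_t X_I(x_0)=0$ appearing in Theorem~\ref{BCTheorem} reduce asymptotically to $d_t(x_0)$ and $B_1 = 0$, respectively. Your proposal instead re-derives the content of Theorem~\ref{BCTheorem} from scratch (Feynman--Kac, Chen/Magnus expansion of the parallel transport, truncation). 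That is legitimate in spirit, but it means you hit exactly the hard analytic step --- the validity of the tangent-cone approximation for a hypoelliptic bridge --- that the appendix theorem was written to package, and you flag it as the ``main obstacle'' without resolving it. If you invoke Theorem~\ref{BCTheorem} directly, that obstacle disappears because the error bound $O\bigl(t^{(N+1-Q)/2}\bigr)$ is already established there.

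Two specific inaccuracies. First, you justify truncating at $d(I)\le n+2m$ by saying ``higher-degree terms vanish under $\mathbf{Str}$ or are of higher order in $t$ after the Brownian scaling.'' That is not the mechanism here: the cutoff $N=n+2m$ is chosen so that $N = Q$ and the Chen-series remainder in Theorem~\ref{BCTheorem} is $O(t^{1/2})$, subdominant to the $O(1)$ main term once multiplied by $d_t(x_0)\sim c\,t^{-(n/2+m)}$. The vanishing of supertraces of low-Fermionic-degree terms is a separate fact used only in the subsequent Lemma, not in this Corollary. Second, you argue that conditioning $X_t=x_0$ ``becomes'' conditioning $B_1=0$ by an abstract rescaling-of-diffusion argument. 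The paper's more precise route is algebraic: in the conditional expectation of Theorem~\ref{BCTheorem} the conditioning is on $\sum_I \Lambda_I(B)_t X_I(x_0)=0$, and step~two forces all $X_I$ with $|I|>2$ to lie in $\mathrm{span}\{X_i\} + \mathrm{span}\{[X_j,X_k]\}$, so the conditioning vector coincides (to leading order after the Brownian scaling $B_{t\cdot} = \sqrt{t}\,B_\cdot$ in law) with $V_t(x_0)$, whence $B_1=0$. That observation is the heart of the paper's proof, and it is missing from yours even though you do cite Corollary~\ref{cor:SurjV}(a) for a related purpose.
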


\begin{proof}
Since $\mathcal H$ is two-step bracket generating, the homogeneous dimension is $Q = \dim \mathcal{H} + 2\dim\mathcal{V} = n + 2m$.  Taking $N = n + 2m$ in Theorem \ref{BCTheorem},  and applying similar arguments as in the proof of Proposition 4.2 in \cite{index},  the corollary follows by recognizing that for $|I| > 2, X_I$ is a linear combination of $X_i , [X_j,X_k]$ so that when $t \to 0$ the density at 0 of 
\[\sum_{I, d(I) \leq n+2m} \Lambda_I(B)_t X_I \]
is equivalent to $d_t(x_0)$ from the previous lemma.
\end{proof}

Applying the previous results we are now able to compute $\lim_{t \rightarrow 0} \mathbf{Str} (p_{\mathcal{H},\ve} (t,x_0,x_0))$. Choose local orthonormal basises $X_1, \dots, X_n$ and $Z_1, \dots, Z_m$ of respectively $\calH$ and $\calV$. 
\begin{lemma}
The integral
\begin{align*}
\mathcal{J} = \mathcal{J}(x_0)=\frac{2^m}{(2\pi )^{\frac{n}{2}+m} } \int_{\mathcal{V}_{x_0}} \det \left( \frac{\sqrt{J_z^* J_z}}{\sinh \sqrt{J_z^* J_z}}\right)^{1/2} dz,
\end{align*}
is a constant, so independent of the point $x_0 \in \M$ chosen. Furthermore, it holds that
\begin{align*} 
\lim_{t \to 0} \mathbf{Str}(p_{\mathcal{H},\ve} (t,x_0,x_0) ) =
\begin{cases}
\frac{\mathcal{J}}{\left( \frac{n}{2}+m\right)! }\mathbb{E} \left(\left. \mathbf{Str} \left[ A_{x_0}^{\frac{n}{2}+m} \right] \right\vert B_1=0\right), \quad \text{if n is even}\\ \\
0, \quad \text{if n is odd}.
\end{cases}
\end{align*}
where the random variable $A_{x_0}$ is given by
\begin{equation} \label{Adef1} A_{x_0} = - \frac{1}{2} \sum_{i,j,k,l=1}^n  \left(R_{kli}^j +\frac{1}{\ve} \sum_{r=1}^m T_{kl}^r T_{ij}^r \right)  a_i^* a_j^* a_l a_k+ \sum_{1 \le i<j \le n} \sum_{r,s=1}^m T_{ij;r}^s b_r^* b_s \int_0^1 B^i_t dB^j_t -B^j_t dB^i_t. \end{equation}
\end{lemma}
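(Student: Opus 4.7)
The independence of $\mathcal{J}(x_0)$ from $x_0$ is immediate from Remark~\ref{densityRemark}: $d_t(x_0)$ does not depend on $x_0$, and $\mathcal{J}$ differs from $d_t(x_0)$ only by a constant multiplicative factor (absorbing $(4\pi t)^{n/2+m}/(2\pi)^{n/2+m} = (2t)^{n/2+m}$), so the integral defining $\mathcal{J}$ is also constant in $x_0$.

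For the limit formula, the plan is to apply Corollary~\ref{BCCorrolary} and analyze the right-hand side by Brownian rescaling, in the spirit of~\cite{bismutCR, index}. Starting from
\[
\mathbf{Str}(p_{\mathcal{H},\ve}(t,x_0,x_0)) \sim d_t(x_0)\,\mathbb{E}\Big[\mathbf{Str}\Big(\exp\Big(\sum_{d(I)\le n+2m}\Lambda_I(B)_t\,\mathcal{F}_I\Big)(x_0)\Big)\,\Big|\,B_1=0\Big],
\]
I would rescale Brownian time via $B_s \mapsto \sqrt{t}\,\tilde B_{s/t}$, so that each iterated Stratonovich integral transforms as $\Lambda_I(B)_t \stackrel{d}{=} t^{|I|/2}\Lambda_I(\tilde B)_1$. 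Since $d_t(x_0) \sim C \cdot t^{-(n/2+m)}$ as $t\to 0$, a matching of $t$-powers shows that only Chen-series terms of total weight $t^{n/2+m}$ can contribute in the limit. Expanding the operators $\mathcal{F}_I$ using the Weitzenb\"ock potential $-\scrR_\ve$ in its symmetric form~\eqref{scrR3} together with the Chen expansion of the curvature of $\nabla^\ve$, the surviving contributions fall into two classes: the horizontal $4$-fermion curvature $-\tfrac12\sum(R^j_{kli}+\ve^{-1}\sum_r T^r_{kl}T^r_{ij})\,a_i^*a_j^*a_la_k$ (each factor of weight $t$), and the mixed vertical operator $\sum T^s_{ij;r}\,b_r^*b_s$ paired with the L\'evy area $\int_0^1 B^i\,dB^j-B^j\,dB^i$ (also weight $t$). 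These assemble into the operator $A_{x_0}$ of~\eqref{Adef1}.

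To conclude I would expand $\exp(A_{x_0})=\sum_k A_{x_0}^k/k!$ and invoke the Berezin-type supertrace vanishing on $\wedge T^*\M$: $\mathbf{Str}$ is nonzero only on Clifford monomials containing all creators and annihilators. Each factor of $A_H = -\tfrac12\sum(\cdots)a_i^*a_j^*a_la_k$ contributes two $a^*$'s and two $a$'s, and each factor of $A_V = \sum(\cdots)b_r^*b_s$ contributes one $b^*$ and one $b$. To fill all $n$ horizontal and $m$ vertical creators/annihilators one needs exactly $k = n/2+m$ factors, giving $\mathbf{Str}(\exp A_{x_0})=\mathbf{Str}(A_{x_0}^{n/2+m})/(n/2+m)!$ in the even-$n$ case. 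When $n$ is odd, $n/2$ is not an integer and no power of $A_H$ can contain all $n$ horizontal creators, so the supertrace vanishes identically.

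The main obstacle is the identification step: precisely establishing that only the two classes of terms above contribute at $t$-order $n/2+m$. This requires careful tracking of the Chen iterated integrals up to length $n+2m$ together with the structure of the operators $\mathcal{F}_I$ arising from both the Weitzenb\"ock potential $\scrR_\ve$ and the curvature of the connection $\nabla^\ve$, and verifying that mixed or higher-bracket terms cancel or do not survive the supertrace. The remaining manipulations (conditioning on $B_1=0$, extraction of the top power, and assembling the prefactor $\mathcal{J}$) are then routine bookkeeping.
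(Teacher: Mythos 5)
Your proof takes essentially the same route as the paper's: the independence of $\mathcal{J}$ from $x_0$ via the identity $\mathcal{J}(x_0)=(2t)^{n/2+m}d_t(x_0)$ together with Remark~\ref{densityRemark}, and the limit formula via the Brownian Chen series expansion, Brownian scaling, and the Fermionic supertrace vanishing to isolate the $k=\tfrac{n}{2}+m$ term (with only $\mathcal{F}_0=-\scrR_\ve$ and $\mathcal{F}_{(i,j)}=\hat R^\ve(X_i,X_j)$, the latter contributing $T^s_{ij;r}b_r^*b_s$ on the vertical sector, surviving). The one point to tighten: the paper applies the supertrace vanishing \emph{before} rescaling, showing $\mathbf{Str}\big[(\sum_I\Lambda_I\mathcal{F}_I)^k\big]=0$ for $k<\tfrac{n}{2}+m$, which is essential since after multiplication by $d_t\sim C t^{-(n/2+m)}$ those terms would diverge rather than merely ``not contribute''; your phrasing that power-matching alone shows only weight $t^{n/2+m}$ contributes glosses over this, though you do supply the creator/annihilator counting argument later, so the logic closes. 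Also note that the correct scaling exponent is $d(I)/2$ (each zero index in a word counted twice), not $|I|/2$ if $|I|$ denotes word length, though this does not change the conclusion since the two surviving word types $(0)$ and $(i,j)$ both have $d(I)=2$.
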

%
%
\begin{proof}
First, observe that \[\mathcal{J}(x_0) = (2t)^{\frac{n}{2}+m} d_t(x_0),\] and so the independence of $\mathcal{J}(x_0)$ from $x_0$ follows from Corollary \ref{cor:SurjV}(b) as in Remark \ref{densityRemark}.

Consider the expansion
\[\mathbf{Str} \left[\operatorname{exp}\left(\sum_{I,d(I) \leq n+2m} \Lambda_I(B)_t \mathcal{F}_I\right)(x_0)\right]\ = \sum_{k \geq 0}\frac{1}{k!}\mathbf{Str} \left[\left(\sum_{I,d(I) \leq n+2m} \Lambda_I(B)_t \mathcal{F}_I\right)^k(x_0)\right]. \]

 From the Weitzenb\"ock identity \eqref{WIdentity} we have for $i,j \in \{1,\dots,n+m\}$ that
\[\mathcal{F}_0 = -\scrR_\ve, \quad \mathcal{F}_i = 0, \quad \mathcal{F}_{(i,j)} = \hat{R}^\ve(Y_i,Y_j) \]
where $\{Y_1, \dots, Y_{n+m}\}$ form a local orthonormal frame and the $\{c_i,c^*_i\}_{i=1}^{n+m}$ form the associated Fermion calculus of $T\M$. Equation \eqref{scrR3} allows us to write
\[\scrR_\ve = \sum_{i,j ,k=1}^n  \langle \hat R^\ve (X_i, X_k) X_j, X_i \rangle_g a_k^* a_i + \sum_{i,j,k,l} \langle \hat R^\ve (X_i, X_j) X_k, X_l \rangle_g a_i^* a_j^* a_l a_k\]
where $\{a_i,a^*_i\}$ form the Fermion calculus for $\mathcal{H}$.

Recalling equation \eqref{supertraceformula} in the appendix we see that the supertrace will vanish for any term that is not of full degree; from our expressions for $\mathcal{F}_I$ it is thus clear that for $k < \frac{n}{2}+m$
\[\mathbf{Str} \left[\left( \sum_{I, d(I) \leq n+2m} \Lambda_I(B)_t \mathcal{F}_I \right)^k(x_0)\right] = 0.\]

Let us assume that $n$ is even. Applying the scaling property of Brownian motion, when $t \rightarrow 0$ the term $k = \frac{n}{2} + m$ will be dominant.  More precisely,
\begin{equation}\label{limitEQ1}
\begin{split}
\mathbb{E}&\left(  \left. \mathbf{Str} \left[\operatorname{exp}\left(\sum_{I,d(I) \leq n+2m} \Lambda_I(B)_t \mathcal{F}_I\right)(x_0)\right]  \right\vert B_1=0 \right)  \\
&= \frac{1}{\left(\frac{n}{2}+m\right)!}\mathbb{E}\left( \left. \mathbf{Str}\left[\left(\sum_{I, d(I) \leq n+2m} \Lambda_I(B)_t \mathcal{F}_I \right)^{\frac{n}{2}+m}(x_0)\right] \right\vert B_1=0 \right) + O\left(t^{\frac{n}{2}+m+\frac{1}{2}}\right). \\
\end{split}
\end{equation}
Then we have,
\begin{equation}\label{limitEQ2}
\begin{split}
\mathbb{E} & \left( \left. \mathbf{Str}\left[\left(\sum_{I, d(I) \leq n+2m} \Lambda_I(B)_t \mathcal{F}_I \right)^{\frac{n}{2}+m}(x_0)\right]  \right \vert B_1=0 \right) \\
& = \mathbb{E}\left( \left. \mathbf{Str}\left[\left( - t \scrR_\ve(x_0)+ \sum_{1 \le i<j \le n} \sum_{r,s=1}^s \hat{R}_{iir}^{\ve,s} b_r^* b_s \int_0^t B^i_u dB^j_u -B^j_u dB^i_u \right)^{\frac{n}{2}+m} \right] \right\vert B_1=0 \right) + O\left(t^{\frac{n}{2}+m+\frac{1}{2}}\right). \\
\end{split}
\end{equation}
We can then further simplify this expression using that by Lemma~\ref{lemma:hatRtoR}, Appendix, we know that $\hat R_{ijr}^{\ve,s} = R_{ijr}^s = T_{ij;r}^s$. We  also use \eqref{scrR3} and the fact that only the last term in $\mathscr{R}_\ve$ contributes to the supertrace. Combining Lemma \ref{densityLemma}, Corollary \ref{BCCorrolary}, and equations \eqref{limitEQ1} and \eqref{limitEQ2}, we apply the scaling property of Brownian motion again to find
\[\mathbf{Str} (p_{\mathcal{H},\ve} (t,x_0,x_0) ) =\frac{\mathcal{J}}{\left( \frac{n}{2}+m\right)! }\mathbb{E} \left(\mathbf{Str} \left[\left. A_{x_0}^{\frac{n}{2}+m} \right] \right\vert \\
B_1 = 0 \right) + O\left(t^{\frac{1}{2}}\right).\]
If $n$ is odd, we get by similar arguments that
\[\mathbf{Str} (p_{\mathcal{H},\ve} (t,x_0,x_0) ) = O\left(t^{\frac{1}{2}}\right).\]
completing the proof.
\end{proof}
%

In what follows, we will introduce the tensor $\scrT$ by
$$\scrT(Y_1, Y_2) = \hat R^\ve(\pi_{\calH} Y_1, Y_2) \pi_{\calV} = \pi_{\calV} \hat R^\ve(\pi_{\calH} Y_1, Y_2).$$
We observe that for any $X_1, X_2 \in \Gamma(\calH)$ and $Z \in \calV$, we have
$$\scrT(X_1, X_2) Z= (\nabla_Z T)(X_1, X_2) = \frac{1}{2\ve} \left( T(J_Z X_1, X_2) + T(X_1, J_Z X_2) \right),$$
where the latter equality follows from the symmetry condition of $\Delta_{\calH,\ve}$.
\begin{example}[H-type foliation] We again consider the case of the of H-type foliations as in Example~\ref{HtypeEx}. We recall that in this case, we have that $\Delta_{\calH,\ve}$ for $\ve = \frac{1}{\kappa}$. Let $x \in \M$ be a fixed point and let $\mathbf{Cl}(\calV_x)$ be the Clifford algebra of the vertical space. We remark that in this case, we have for any $u,v \in \calH_x$ with $v \in (\spn_{\zeta \in \mathbf{Cl}(\calV_x)} J_{\zeta} u)^\perp$, then we have $\scrT(u,v) = 0$. On the other hand if $v = J_\zeta u$, then for any $z \in \calV_x$,
$$\scrT(u, J_\zeta u)z = \kappa \pi_{\calV_x} (z \cdot \zeta^{\mathrm{odd}}),$$
where $\zeta^{\mathrm{odd}}$ is the odd part of $\zeta$ and $\pi_{\calV_x} \mathrm{Cl}(\calV_x) \to \calV_x$ is the projection to the first order part.

\end{example}

We can use the above definition and the previous lemma to prove the following.

\begin{proposition}\label{patodi}
Assume that $n$ or $m$ is odd, then
\[
\lim_{t \to 0}   \mathbf{Str}\text{ } (p_{\mathcal{H},\ve} (t,x,x) ) \, dx = 0
\]
Assume that both $n$ and $m$ are even, then
\[\lim_{t \to 0}   \mathbf{Str}\text{ } (p_{\mathcal{H},\ve} (t,x,x) ) \, dx =   \hat{\omega}_\mathcal{H}^\ve \wedge \left[  \det \left( \frac{\scrT}{\sinh (\scrT)}\right)^{1/2}\right]_m  \]

where $\left[ \cdot \right]_m $ denotes the $m$-form part and $\hat{\omega}_\mathcal{H}^\ve  $ is the horizontal Euler form, which is locally defined as 

\[
\hat{\omega}_\mathcal{H}^\ve=\frac{(-1)^{n/2} m!}{ 2^{n/2} \left(\frac{n}{2}+m \right)!} \mathcal{J} \sum_{\sigma,\tau
\in \mathfrak{S}_n} \epsilon (\sigma) \epsilon (\tau) \prod_{i=1}^{n-1}
\hat{R}^{\ve, \tau (i+1)}_{\sigma (i) \sigma (i+1) \tau (i) }dx_\mathcal{H}, 
\] 
where $\mathfrak{S}_n$ is the set of the permutations of the indices
$\{1,...,n\}$, $\epsilon$ the signature of a permutation, $\hat{R}^{\ve,l}_{ijk}$ is as in \eqref{indices} and $dx_\mathcal{H}$ the $n$-form $X_1^* \wedge \cdots \wedge X_n^*$.

\end{proposition}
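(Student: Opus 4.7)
The plan is to leverage the previous lemma: for $n$ odd the stated vanishing is immediate, so the remaining task is to identify the limit $\frac{\mathcal{J}}{(n/2+m)!}\mathbb{E}\bigl[\mathbf{Str}(A_{x_0}^{n/2+m})\,\bigl|\,B_1=0\bigr]$ in the two cases ``$n$ even, $m$ odd'' and ``$n,m$ both even''. The key observation is that $A_{x_0}$ decomposes as $A_1+A_2$, where
\begin{align*}
A_1 &= -\tfrac{1}{2}\sum_{i,j,k,l}\!\Bigl(R^j_{kli}+\tfrac{1}{\ve}\sum_r T^r_{kl}T^r_{ij}\Bigr) a_i^* a_j^* a_l a_k,\qquad
A_2 = \sum_{i<j,\,r,s} T^s_{ij;r}\, b_r^* b_s\, L_t^{ij},
\end{align*}
with $L_t^{ij}=\int_0^1(B^i\,dB^j-B^j\,dB^i)$. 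Since $A_1$ is built purely from horizontal Fermion operators $a,a^*$ and $A_2$ purely from vertical ones $b,b^*$, and both carry an even total fermionic degree, $A_1$ and $A_2$ commute. Hence the binomial theorem applies,
\begin{align*}
A_{x_0}^{n/2+m}=\sum_{k=0}^{n/2+m}\binom{n/2+m}{k} A_1^{\,k}\, A_2^{\,n/2+m-k}.
\end{align*}

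Using the canonical isomorphism of graded modules $\wedge T^*\M\cong \wedge\calH^*\,\hat\otimes\,\wedge\calV^*$, the supertrace factorizes as $\mathbf{Str}=\mathbf{Str}_{\wedge\calH^*}\otimes\mathbf{Str}_{\wedge\calV^*}$ on products of operators acting on disjoint tensor factors. Thus
\begin{align*}
\mathbf{Str}\!\bigl(A_1^{\,k}A_2^{\,n/2+m-k}\bigr)=\mathbf{Str}_{\wedge\calH^*}\!\bigl(A_1^{\,k}\bigr)\cdot \mathbf{Str}_{\wedge\calV^*}\!\bigl(A_2^{\,n/2+m-k}\bigr).
\end{align*}
Now $A_1$ has weight $(+2,-2)$ in the $(a^*,a)$-count, so $A_1^k$ contains precisely $2k$ creation operators $a_i^*$; because squared creation operators vanish, this forces $2k\le n$, while a non-zero supertrace on $\wedge\calH^*$ demands $2k=n$ (since strictly partial Berezin integrals vanish). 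Similarly $A_2^{q}$ contains exactly $q$ vertical creation operators $b_r^*$, forcing $q=m$. Therefore only the single term $k=n/2,\;q=m$ survives, and
\begin{align*}
\mathbf{Str}(A_{x_0}^{n/2+m})=\binom{n/2+m}{n/2}\,\mathbf{Str}_{\wedge\calH^*}\!\bigl(A_1^{n/2}\bigr)\cdot \mathbf{Str}_{\wedge\calV^*}\!\bigl(A_2^{m}\bigr).
\end{align*}

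If $n$ is even but $m$ is odd, the vertical factor $\mathbf{Str}_{\wedge\calV^*}(A_2^m)$ vanishes identically: expanding $A_2^m$ and computing the Berezin integral produces, up to combinatorial constants, the Pfaffian of an antisymmetric operator on the $m$-dimensional vertical space, which is zero in odd dimension. This disposes of the odd-$m$ case. When both $n$ and $m$ are even, the horizontal factor is computed by the standard Fermion/Berezin formula which identifies $\mathbf{Str}_{\wedge\calH^*}(A_1^{n/2})/(n/2)!$ with the Pfaffian of the $\hat R^\ve$-curvature on $\calH$, yielding exactly the local expression for $\hat\omega_\calH^\ve$ once the normalization $\mathcal{J}/(n/2+m)!$ is absorbed. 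For the vertical factor, one computes
\begin{align*}
\mathbb{E}\!\left[\frac{1}{m!}\mathbf{Str}_{\wedge\calV^*}\!\Bigl(\sum_{i<j,r,s}T^s_{ij;r}\,b_r^*b_s\,L_1^{ij}\Bigr)^{\!m}\;\Big|\;B_1=0\right]
\end{align*}
by combining the Berezin formula in $(b^*,b)$ (which produces the Pfaffian in the antisymmetric matrix $\Omega_{ij}=\scrT(X_i,X_j)\in\End(\calV)$) with the classical conditional Lévy area characteristic function, $\mathbb{E}[e^{\frac{i}{2}\sum\alpha_{ij}L_1^{ij}}|B_1=0]=\det(\alpha/\sinh\alpha)^{1/2}$. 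Assembling these ingredients yields the factor $\bigl[\det\bigl(\scrT/\sinh\scrT\bigr)^{1/2}\bigr]_m$, and wedging with $\hat\omega_\calH^\ve$ gives the stated formula.

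The main obstacle is the last step: the clean identification of the conditional expectation of the iterated Lévy-area-valued Fermion polynomial with the Mathai--Quillen-type form $[\det(\scrT/\sinh\scrT)^{1/2}]_m$. The combinatorics of Berezin integration in the $b^*,b$ variables must be matched precisely with the expansion of the $\det(\cdot/\sinh(\cdot))^{1/2}$ power series in even powers of $\scrT$, while the normalizing constants coming from $\mathcal{J}$, the Gaussian density at the origin, and the binomial factor $\binom{n/2+m}{n/2}$ must all conspire to produce the stated formula without residual factors.
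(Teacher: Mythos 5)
Your approach is essentially the paper's: you factorize $\mathbf{Str}(A_{x_0}^{n/2+m})$ into a horizontal piece analyzed via Berezin/Fermion calculus and a vertical piece analyzed via the L\'evy area formula, and you defer the detailed combinatorics to the same standard references. The extra detail you supply on the decomposition $A=A_1+A_2$, the commutation of $A_1$ with $A_2$, and the degree count forcing $k=n/2$, $q=m$ is a welcome clarification, and matches what is implicit in the paper when it passes from $\mathbf{Str}(A^{n/2+m})$ to the displayed product of a horizontal and vertical factor.

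One small correction is warranted in your treatment of the case $n$ even, $m$ odd. The vertical factor $\mathbf{Str}_{\wedge\calV^*}(A_2^m)$ is, up to a combinatorial constant, the \emph{determinant} of the random skew-symmetric endomorphism $\Omega=\sum_{i<j}L_1^{ij}\,\scrT(X_i,X_j)$ of $\calV$ (this is the top Berezin coefficient of $\exp(b^*\Omega b)$, namely $\det(1-e^{\Omega})$, whose $m$-th order term is $(-1)^m\det\Omega$), not its Pfaffian. The Pfaffian is not even defined in odd dimension; the vanishing follows because a skew-symmetric operator on an odd-dimensional Euclidean space is singular, so $\det\Omega=0$. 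The conclusion you reach is correct, but the terminology should be adjusted. Finally, the ``main obstacle'' you flag --- the precise matching of normalizing constants and the identification of the Fermion/L\'evy-area polynomial with $\bigl[\det(\scrT/\sinh\scrT)^{1/2}\bigr]_m$ --- is exactly the content of Theorem 4.3 of \cite{index} and Proposition 5.6 of \cite{ESAIM}, which the paper invokes for precisely this purpose, so no new idea is needed there.
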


\begin{proof}

We first assume that both $n$ and $m$ are even.  It remains to compute $\mathbb{E} \left(\mathbf{Str} \left[\left. A_{x_0}^{\frac{n}{2}+m} \right] \right\vert \\
B_1 = 0 \right)$.
Looking at \eqref{Adef1}, we have
\begin{align*}
& {\scriptstyle \mathbb{E} \left(\mathbf{Str} \left[\left. A_{x_0}^{\frac{n}{2}+m} \right] \right\vert  B_1 = 0 \right)} \\
=&{\scriptstyle \mathbf{Str} \left[ \left(-\sum_{i,j,k,l} \langle \hat R^\ve (X_i, X_j) X_k, X_l \rangle_g a_i^* a_j^* a_l a_k\right)^{n/2} \mathbb{E}\left[ \left. \left( \sum_{1 \le i<j \le n} \scrT (X_i, X_j)(x_0) \int_0^1 B^i_s dB^j_s -B^j_s dB^i_s \right)^{m} \right\vert B_1=0 \right] \right]}
\end{align*}

The term $\left(\sum_{i,j,k,l} \langle \hat R^\ve (X_i, X_j) X_k, X_l \rangle_g a_i^* a_j^* a_l a_k\right)^{n/2}$ is then analyzed as in the proof of Proposition 5.6 in \cite{ESAIM} (see also Lemma 2.35 in \cite{rosenberg}) and up to constant yields the horizontal Euler form $\hat{\omega}_\mathcal{H}^\ve$.  On the other hand, using again the formula for the supertrace, the term 
\[
\mathbb{E}\left[ \left. \left( \sum_{1 \le i<j \le n} \scrT (X_i, X_j)(x_0) \int_0^1 B^i_s dB^j_s -B^j_s dB^i_s \right)^{m} \right\vert B_1=0 \right]
\]
 can be replaced with 
\[
m! \, \mathbb{E} \left[ \left. \exp \left( \sum_{1 \le i<j \le n} \scrT (X_i, X_j)(x_0) \int_0^1 B^i_s dB^j_s -B^j_s dB^i_s \right) \right \vert B_1=0\right]
\]

and is analyzed using the L\'evy area formula as in the proof of Theorem 4.3 in \cite{index}: it yields the top degree  Fermionic piece of  $\det \left( \frac{\scrT}{\sinh (\scrT)}\right)^{1/2} (x_0) \in \mathbf{End} \left( \wedge
\mathcal{V}_{x_0}^\ast\right)$ (Fermionic calculus is done here on $\mathcal{V}_{x_0}$). 


If $n$ is even and $m$ is odd, a similar analysis shows that we  have  
\[
\mathbb{E} \left(\left. \mathbf{Str} \left[ A_{x_0}^{\frac{n}{2}+m} \right] \right\vert B_1=0\right)=0.
\]

\end{proof}

Combining Theorem \ref{PG result} and Proposition \ref{patodi} finally yields our main theorem:

\begin{theorem}
Assume that both $n$ and $m$ are even, then
\[
\chi (\M)= \int_\M    \hat{\omega}_\mathcal{H}^\ve \wedge \left[  \det \left( \frac{\scrT}{\sinh \scrT}\right)^{1/2}\right]_m . 
\]
Assume that $n$ or $m$ is odd, then $\chi (\M)=0$.
\end{theorem}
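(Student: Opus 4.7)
The final theorem is essentially a direct combination of the two main technical results already established: the horizontal McKean-Singer formula (Theorem \ref{PG result}) and the small-time pointwise asymptotic (Proposition \ref{patodi}). My plan is to pass from the pointwise small-time limit of the supertrace of the heat kernel on the diagonal to its integral over $\M$, and then identify the resulting constant with $\chi(\M)$ via McKean-Singer.

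More concretely, the first step is to write, for every fixed $t>0$,
\[
\chi(\M) = \mathbf{Str}(e^{t\Dhe}) = \int_\M \mathbf{Str}\bigl(p_{\calH,\ve}(t,x,x)\bigr)\, d\mu(x),
\]
using Theorem \ref{PG result}. Since the left-hand side is independent of $t$, I take $t\to 0$ on the right. The second step is to justify the interchange of limit and integral. This is where the only real work lies: one needs a uniform (in $x\in\M$ and in $t\in(0,t_0]$) bound on $|\mathbf{Str}(p_{\calH,\ve}(t,x,x))|$ strong enough to allow dominated convergence. I expect to obtain such a bound by combining compactness of $\M$ with a small-time Gaussian upper bound on the hypoelliptic heat kernel on forms, which itself follows from the parametrix construction via the Brownian Chen series of Appendix A.2; the estimates in that construction are uniform in $x_0\in\M$ because the tangent cone structure from Corollary \ref{cor:SurjV}(b) is isometric at every point.

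The third step is then to insert Proposition \ref{patodi} under the integral sign: in the case where $n$ or $m$ is odd, the pointwise limit is identically zero, so $\chi(\M)=0$; in the case where $n$ and $m$ are both even, the pointwise limit equals the density $\hat{\omega}_\calH^\ve \wedge [\det(\scrT/\sinh\scrT)^{1/2}]_m$, and integrating over $\M$ yields the stated formula. No further computation is needed because all curvature bookkeeping has already been done in Proposition \ref{patodi}.

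The main obstacle, as noted, is the uniform domination step. A cleaner alternative would be to observe that all of the quantities entering the Brownian Chen series expansion of $\mathbf{Str}(p_{\calH,\ve}(t,x,x))$ are smooth functions of $x$, and that the remainder terms in the expansion of Theorem \ref{BCTheorem} are of order $o(1)$ uniformly in $x\in\M$ by compactness; this would give uniform convergence of the integrand and avoid a separate domination argument. Either way, once the interchange is justified, the theorem follows immediately by combining the two previously proved results.
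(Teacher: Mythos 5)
Your proposal is correct and takes essentially the same route as the paper: the paper's proof of this theorem is literally the one-line remark ``Combining Theorem~\ref{PG result} and Proposition~\ref{patodi} finally yields our main theorem,'' leaving the interchange of $t\to 0$ with $\int_\M$ implicit, whereas you make that interchange explicit and correctly identify (via uniformity over the compact $\M$ of the remainder estimates in Theorem~\ref{BCTheorem}) how to justify it.
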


As a corollary, we obtain the following result:

\begin{corollary}
Assume that $\nabla J=0$, then $\chi (\M)=0$.
\end{corollary}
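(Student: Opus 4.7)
The plan is to dispatch the odd parity cases via the main theorem and then use the hypothesis $\nabla J = 0$ to make the characteristic form vanish in the remaining case. First, if $n$ or $m$ is odd, the conclusion is already contained in the previous theorem, so nothing needs to be done. Since the foliation setup of the paper presumes $m \ge 1$, the remaining case is that $n$ and $m$ are both even with $m \ge 2$, and the strategy is to show that the integrand $\hat{\omega}_\mathcal{H}^\ve \wedge \left[\det(\scrT/\sinh \scrT)^{1/2}\right]_m$ vanishes pointwise.

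The key step is to convert $\nabla J = 0$ into a vanishing statement for $\scrT$. Because the Bott connection is metric compatible and the defining relation $\langle J_Z X, Y \rangle_g = \langle Z, T(X,Y) \rangle_g$ is non-degenerate, $\nabla J = 0$ is equivalent to $\nabla T = 0$; this equivalence is immediate from differentiating the defining relation and using $\nabla g = 0$. In particular, $\nabla_Z T = 0$ for every $Z \in \calV$, so the identity
\[
\scrT(X_1, X_2) Z = (\nabla_Z T)(X_1, X_2), \qquad X_1, X_2 \in \Gamma(\calH), \, Z \in \calV,
\]
recorded just before Proposition \ref{patodi}, forces $\scrT(X_1, X_2) = 0$ as an endomorphism of $\calV$ for all horizontal $X_1, X_2$.

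Finally, since $x/\sinh(x)$ is an even power series with constant term $1$, the expression $\det(\scrT/\sinh \scrT)^{1/2}$ is a formal polynomial in $\scrT$ whose constant term is $1$ and all of whose higher terms involve at least one factor of $\scrT$ evaluated on horizontal arguments. Once $\scrT$ vanishes on such inputs, the whole expression collapses to the constant function $1 \in \Omega^0(\M)$, whose degree-$m$ component is zero for $m \ge 1$. Hence the integrand $\hat{\omega}_\mathcal{H}^\ve \wedge \left[\det(\scrT/\sinh \scrT)^{1/2}\right]_m$ vanishes pointwise, and the main theorem yields $\chi(\M) = 0$. The only nonobvious ingredient is the equivalence $\nabla J = 0 \Leftrightarrow \nabla T = 0$, which I do not expect to be a genuine obstacle.
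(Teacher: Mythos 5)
Your proposal is correct and essentially matches the paper's (one-line) proof, which simply observes that $\nabla J = 0$ forces $\scrT = 0$, whence the $m$-form component of $\det(\scrT/\sinh \scrT)^{1/2}$ vanishes for $m \ge 1$; you have just filled in the routine details (the equivalence $\nabla J = 0 \Leftrightarrow \nabla T = 0$ via metric compatibility, and the observation that the determinant factor collapses to the identity so its degree-$m$ Fermionic piece is zero).
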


\begin{proof}
If $\nabla J=0$ then $\scrT=0$.
\end{proof}

\appendix

\section{Appendices}

\subsection{Fermion calculus and supertraces} \label{sec:Fermion}

In this section, we recall some basic elements of Fermion calculus, see section 2.2.2 in \cite{rosenberg} for more details. Let $V$ be a
$d$-dimensional Euclidean vector space. We denote $V^\ast$ its
dual and $\wedge V^\ast=\bigoplus_{k \ge 0} \wedge^k V^\ast$, its exterior algebra. If $u \in V^\ast$, we denote $a^\ast_u$ the
map $\wedge V^\ast \rightarrow \wedge V^\ast$, such that $a^\ast_u
(\omega)=u \wedge \omega$. The dual map is denoted $a_u$. Let now
$\theta_1$, ..., $\theta_d$ be an orthonormal basis of $V^\ast$.
We denote $a_i=a_{\theta_i}$.  If $I$ and $J$ are two words with
$1 \le i_1 < \cdots < i_k \le d$ and $1 \le j_1 < \cdots < j_l \le
d$, we denote
\[
A_{IJ}=a^\ast_{i_1} \cdots a^\ast_{i_k}   a_{j_1} \cdots a_{j_l}.
\]
The family of all the possible $A_{IJ}$ forms a basis of the
$2^{2d}$-dimensional vector space $\mathbf{End} \left( \wedge
V^\ast\right)$.  

If $A \in \mathbf{End} \left( \wedge
V^\ast\right)$, the supertrace $\mathbf{Str} (A)$ is the
difference of the trace of $A$ on even forms minus the trace of
$A$ on odd forms.  If $A = \sum_{I,J} c_{IJ} A_{IJ}$,
then we have
\begin{align}\label{supertraceformula}
\mathbf{Str} (A)=(-1)^{\frac{d(d-1)}{2}} c_{\{1,...,d\}\{1,...,d\}}.
\end{align}
In this paper, $c_{\{1,...,d\}\{1,...,d\}}$ will be called the top degree  Fermionic piece of $A$ and 
\[
[A]_d:= (-1)^{\frac{d(d-1)}{2}} c_{\{1,...,d\}\{1,...,d\}} \theta_1 \wedge \cdots \wedge \theta_d
\]
the $d$-form part of $A$.

\subsection{The Brownian Chen series parametrix method }

For the sake of completeness and to introduce some notations used in the paper, we reproduce here the essential ideas from \cite{bismutCR,ESAIM, index} to which we refer for further details.  Let $\mathcal{E}$ be a finite-dimensional vector bundle over a compact manifold  $\M$ equipped with a connection $D$ and consider a second order differential operator $\mathcal{L} = D_0 + \sum_{i=1}^d D_i^2$ with $D_i = \mathcal{F}_i + D_{X_i}$ for some smooth vector fields $X_i$ and potentials $\mathcal{F}_i$ on $\mathcal{E}$.  It is known that the differential equation 
\[\frac{\partial \Phi}{\partial t} = \mathcal{L}\Phi, \quad \Phi(0,x) = f(x)\]
has solution
\[\Phi(t,x) = (e^{t\mathcal{L}}f)(x) = P_tf(x).\]
At strongly regular points $x_0 \in \M$ it is furthermore true that $P_t$ admits a smooth heat kernel 
\[p_t(x_0, \cdot) \colon \mathbb{R}_{>0} \rightarrow \Gamma(\M,\operatorname{Hom}(\mathcal{E}))\]
\[t \mapsto p_t(x_0, \cdot)\]
which is to say
\[(P_tf)(x_0) := (e^{t\mathcal{L}}f)(x_0) = \int_\M p_t(x_0, y)f(y)\ dy.\]

We have a method of approximation for the heat kernel in this setting.
\begin{theorem}\label{BCTheorem}
Let $N \geq 1$ and define $(P_t^N f)(x) = \mathbb{E}(\Psi(1,x))$ where $\Psi(\tau, x)$ solves the random differential equation
\begin{equation}\label{PNt equation}
\frac{\partial \Psi}{\psi \tau} = \sum_{I \colon d(I) \leq N} \Lambda_I(B)_t (D_I \Psi)(\tau,x), \qquad \Psi(0,x) = f(x).
\end{equation}
where $I = (i_1,\dots,i_k) \in \{0,\dots,d\}^k$ is a word, $D_I = [D_{i_1},[\dots,[D_{i_{k-1}},D_{i_k}]\dots]]$, $d(I) = n(I)+k$ with $n(I)$ the number of 0's in $I$, and the random coefficients are defined by 
\[\Lambda_I(B)_t = 2^{d(I)/2} \sum_{\sigma \in \mathfrak{S}_k}\frac{(-1)^{e(\sigma)}}{k^2 \begin{pmatrix} k-1 \\ e(\sigma)\end{pmatrix}} \int_{\Delta^k[0,t]} \circ dB^{\sigma^{-1}(I)}\]
where $(B_t)_{t \ge 0}$ is a standard Brownian motion in $\mathbb R^d$.
Then 
\begin{enumerate}[$\bullet$]
\item For $k \geq 0$, define the norm
\[\|f\|_k = \sup_{0 \leq l \leq k} \sup_{0 \leq i_1, \dots, i_k} \sup_{x \in \M} \|D_{i_1}\cdots D_{i_l}f(x)\|.\]
It will hold that for any $k \geq 0$
\[\| P_tf - P_t^Nf\|_k = O\left( t^{\frac{N+1}{2}} \right), \qquad t \rightarrow 0\]
\item $P_t^N$ admits a smooth kernel $p_t^N$ such that for $N \geq 2$
\[p_t(x_0,x_0) = p_t^N(x_0,x_0) + O\left(t^{\frac{N+1-Q}{2}}\right), \qquad t \rightarrow 0\]
where $Q$ is the homogeneous dimension at $x_0$.
\item Write $\mathcal{F}_I = D_I - D_{X_I}$. For $N \geq 2$ it holds as $t \rightarrow 0$ that
\[p_t^N(x_0,x_0) = d_t^N(x_0)\mathbb{E}\left(\left.\operatorname{exp}\left(\sum_{I,d(I) \leq N}\Lambda_I(B)_t\mathcal{F}_I \right) (x_0) \right\vert \sum_{I, d(I) \leq N}\Lambda_I(B)_t X_I(x_0) = 0 \right)+ O\left(t^{\frac{N+1-Q}{2}}\right)\]
where $d_t^N(x)$ is the density at 0 of the random variable $\sum_{I, d(I) \leq N}\Lambda_I(B)_tX_I(x)$.
\end{enumerate}
\end{theorem}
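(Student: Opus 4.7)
The plan is to reduce the corollary to the main theorem just above and verify that the integrand vanishes pointwise under the hypothesis $\nabla J = 0$. When $n$ or $m$ is odd, the theorem already delivers $\chi(\M) = 0$, so the only case to address is $n,m$ both even.

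First, because $\nabla g = 0$, differentiating the defining relation $\langle J_Z X, Y\rangle_g = \langle Z, T(X,Y)\rangle_g$ in $v$ and cancelling the $\nabla_v$-terms on $Z, X, Y$ via the same relation yields
\[
\langle (\nabla_v J)_Z X, Y\rangle_g = \langle Z, (\nabla_v T)(X,Y)\rangle_g,
\]
so that $\nabla J = 0 \iff \nabla T = 0$. Next, I invoke the identity noted just before the H-type example in Section~4,
\[
\scrT(X_1, X_2) Z = (\nabla_Z T)(X_1, X_2), \qquad X_1, X_2 \in \Gamma(\calH),\ Z \in \Gamma(\calV),
\]
from which $\nabla T = 0$ forces $\scrT \equiv 0$ on all of $\M$.

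With $\scrT = 0$, the endomorphism $\scrT/\sinh \scrT$ reduces to the identity on $\calV^{*}$ (using $\lim_{x \to 0} x/\sinh x = 1$), and its square-root determinant is likewise the identity endomorphism. Its $m$-form part, defined through the top-degree Fermionic piece in \eqref{supertraceformula}, vanishes as soon as $m \geq 1$, since the identity carries no $A_{\{1,\dots,m\}\{1,\dots,m\}}$ component. Hence $\hat\omega_\calH^\ve \wedge [\det(\scrT/\sinh\scrT)^{1/2}]_m \equiv 0$ pointwise on $\M$, and the main theorem delivers $\chi(\M)=0$. The only step requiring any care is the bookkeeping chain $\nabla J \Rightarrow \nabla T \Rightarrow \scrT \equiv 0$; beyond that, there is no substantive obstacle, as the heavy lifting is already contained in the horizontal Chern-Gauss-Bonnet formula itself.
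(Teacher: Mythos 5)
You have proven the wrong statement. The statement you were asked to address is Theorem~\ref{BCTheorem}, the Brownian Chen series parametrix theorem in Appendix~A.2: a technical result asserting (i) a small-time $\|\cdot\|_k$-error bound between $P_t$ and the Chen-series approximation $P_t^N$ defined through the random ODE~\eqref{PNt equation}, (ii) the existence of a smooth kernel $p_t^N$ together with an on-diagonal error estimate of order $t^{(N+1-Q)/2}$, and (iii) a conditional-expectation representation of $p_t^N(x_0,x_0)$ in terms of the exponential of $\sum_I \Lambda_I(B)_t \mathcal F_I$ and the density $d_t^N$. None of this appears in your argument.

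What you have instead written is a proof of the final corollary of Section~4, namely that $\nabla J = 0$ implies $\chi(\M) = 0$. Your reasoning there is essentially sound --- $\nabla J = 0 \iff \nabla T = 0$ by metric compatibility of $\nabla$, which forces $\scrT = 0$, so that $\det(\scrT/\sinh\scrT)^{1/2}$ is the identity endomorphism on $\wedge\calV^*$; as the identity equals $A_{\emptyset\emptyset}$ in the Fermion basis, it has no top-degree $A_{\{1,\dots,m\}\{1,\dots,m\}}$ component and its $m$-form part vanishes when $m\geq 1$. That reproduces the paper's one-line proof ``If $\nabla J=0$ then $\scrT=0$'' with some extra detail. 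But it does not touch Theorem~\ref{BCTheorem}, which in the paper is stated without proof and attributed to external references (\cite{bismutCR} and \cite[Section~5.1]{ESAIM}): proving it would require establishing the Strichartz/Ben~Arous-type stochastic Taylor expansion $P_t \approx P_t^N$ with the stated $O(t^{(N+1)/2})$ remainder in the $\|\cdot\|_k$ norms, a Malliavin-calculus or hypoelliptic-estimate argument to upgrade the operator-norm bound to a kernel bound on the diagonal (this is where the loss of $t^{-Q/2}$, the homogeneous dimension, enters), and a factorization of the approximate flow into a ``horizontal'' part on the base and a ``Fermionic'' part acting on the bundle to extract the conditional-expectation formula in the third bullet. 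You need to either carry out that program or explicitly defer to the cited sources, as the paper does.
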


We refer to \cite{bismutCR} and \cite[Section 5.1]{ESAIM} for the proofs and further details, but we remark that roughly the theorem says that in small time we can approximate the heat kernel of $\mathcal{L}$ by the kernel associated to solutions of equation \ref{PNt equation}, for which we will be able to say much more.

\subsection{Curvature of the connection $\hat\nabla^\ve$}
We want to give details on writing the curvatures of $\hat \nabla^\ve$ in terms of the Bott connection $\nabla$.
\begin{lemma} \label{lemma:hatRtoR}
Relative to the notation of \eqref{indices} we have the following identities. Recall that $i,j,k,l$ denotes vector fields from a basis of $\calH$, while indices $r,s$ denotes such elements from a basis of $\calV$
\begin{enumerate}[\rm (i)]
\item $R_{ijk}^l = R_{kli}^j$, $R_{r_1 s_1 r_1}^{s_2} = R_{r_2 s_2 r_1}^{s_1}$,
\item $R_{ijr}^s = T_{ij;r}^s$, $R_{irk}^l =0$, $R_{is_1 r_2}^{s_2} =0$,
\item $T_{ij;r}^r =0$. Equivalently $(\nabla_Z J)_Z =0$ for any vector field $Z$ with values in $\calV$.
\item $\hat R_{ijk}^{\ve,l} = R_{ijk}^l + \frac{1}{\ve} \sum_{s=1}^m T_{ij}^s T_{kl}^s$.
\item $\hat R_{irk}^{\ve,l} = \frac{1}{\ve} T_{kl;i}^s$.
\item $\hat R_{rsk}^{\ve,l}= \frac{2}{\ve} T_{kl;r}^s + \frac{1}{\ve^2} \sum_{i=1}^n (T_{il}^r T_{ki}^s - T_{il}^s T_{ki}^r) $
\end{enumerate}
\end{lemma}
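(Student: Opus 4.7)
The plan is to combine three ingredients: the structural properties of the Bott connection $\nabla$ (metric compatibility, parallelism of the decomposition $T\M=\calH\oplus\calV$, and the fact that the torsion $T$ vanishes whenever one of its arguments is vertical and takes values in $\calV$ on horizontal pairs); the first Bianchi identity with torsion; and the expansion $\hat\nabla^\ve=\nabla+\tfrac{1}{\ve}J$ in the definition of curvature. The single most useful consequence of the structural properties is that $J_X=0$ for any $X\in\calH$: indeed $\langle J_X Y, Z\rangle_g=\langle X, T(Y,Z)\rangle_g$, and $T(Y,Z)\in\calV$ for any $Y,Z$. This vanishing will kill the majority of terms that appear in the formulas below.

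For (i)--(iii) I would apply
\[
\sum_{\mathrm{cyc}} R(A,B)C = \sum_{\mathrm{cyc}}(\nabla_A T)(B,C) + \sum_{\mathrm{cyc}} T(T(A,B),C)
\]
to each choice of $(A,B,C)$ drawn from the bases of $\calH$ and $\calV$. The torsion support properties reduce the right-hand side to at most one surviving $(\nabla T)$ term, and in fact to zero unless exactly one of $A,B,C$ is vertical and the other two horizontal. For $(X_i,X_j,X_k)$ ordinary first Bianchi then combines with the two antisymmetries of $R$ to yield the pair symmetry in (i) by the standard argument, and the purely vertical case repeats this. For $(X_i,X_j,Z_r)$ only $(\nabla_{Z_r}T)(X_i,X_j)$ survives on the right: pairing with $Z_s$ gives $R^s_{ijr}=T^s_{ij;r}$, the first identity of (ii), after which (iii) drops out from metric compatibility in the last pair, $T^s_{ij;r}=\langle R(X_i,X_j)Z_r,Z_s\rangle=-\langle R(X_i,X_j)Z_s,Z_r\rangle=-T^r_{ij;s}$. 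For $(X_i,Z_r,Z_s)$ and $(Z_r,Z_s,X_k)$ the right-hand side vanishes entirely, and the reduced cyclic Bianchi combined with the two antisymmetries of $R$ forces a rank-three tensor to be antisymmetric under two transpositions and symmetric under the third; the associated representation of $S_3$ admits only the zero solution, yielding $R^l_{irk}=0$, $R^{s_2}_{is_1r_2}=0$, and $R^l_{rsk}=0$ (the last needed in (vi)).

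For (iv)--(vi), substituting $\hat\nabla^\ve=\nabla+\tfrac{1}{\ve}J$ into the curvature definition gives, after a direct calculation,
\[
\hat R^\ve(X,Y)W = R(X,Y)W + \tfrac{1}{\ve}\bigl[(\nabla_X J)_Y-(\nabla_Y J)_X\bigr]W + \tfrac{1}{\ve}J_{T(X,Y)}W + \tfrac{1}{\ve^2}[J_X,J_Y]W.
\]
Specialising $X,Y,W$ to horizontal or vertical basis vectors and using $J_X=0$ for $X\in\calH$ together with $T(Z,\cdot)=0$ for $Z\in\calV$, most terms vanish: case (iv) retains only $R(X_i,X_j)X_k$ and $\tfrac{1}{\ve}J_{T(X_i,X_j)}X_k$, the latter pairing with $X_l$ to give $\tfrac{1}{\ve}\sum_s T^s_{ij}T^s_{kl}$ via $\langle J_Z X,Y\rangle=\langle Z,T(X,Y)\rangle$; case (v) retains only $\tfrac{1}{\ve}(\nabla_{X_i}J)_{Z_r}X_k$, whose pairing with $X_l$ equals $\tfrac{1}{\ve}T^r_{kl;i}$; case (vi) retains the two $(\nabla J)$ terms, which by (iii) collapse to $\tfrac{2}{\ve}T^s_{kl;r}$, together with the commutator $\tfrac{1}{\ve^2}[J_{Z_r},J_{Z_s}]X_k$, computed in the basis from $J_{Z_r}X_k=\sum_a T^r_{ka}X_a$.

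The main obstacle is the sign-counting needed to conclude $R^l_{irk}=0$, $R^{s_2}_{is_1r_2}=0$ and $R^l_{rsk}=0$: one must juggle the two antisymmetries of $R$ with the cyclic symmetry left over from Bianchi after the torsion terms have been discarded, and verify that the resulting $S_3$ action on a rank-three tensor leaves only the zero solution. The remaining work is the careful expansion of $\hat\nabla^\ve$ and standard bookkeeping, with the one genuine further use of (iii) occurring in case (vi), where it converts $T^s_{kl;r}-T^r_{kl;s}$ into $2T^s_{kl;r}$.
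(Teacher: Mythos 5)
Your overall strategy is sound and yields a correct proof; the main differences from the paper's own argument are (a) you derive the Bott-curvature identities in (ii) directly from the first Bianchi identity with torsion, whereas the paper simply cites~\cite[Appendix~A]{Gro20} for $R(X_1,X_2)Z_1 = (\nabla_{Z_1}T)(X_1,X_2)$, $R(X_1,Z_1)X_2=0$, $R(X_1,Z_1)Z_2=0$, and (b) for (iii) you use metric compatibility in the last pair of indices ($T^s_{ij;r} = -T^r_{ij;s}$, then set $s=r$), which is cleaner than the paper's cyclic-sum argument $0 = \langle Z, R(X_1,X_2)Z\rangle = \langle Z, (\nabla_Z T)(X_1,X_2)\rangle = \langle X_2, (\nabla_Z J)_Z X_1\rangle$. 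Your self-contained derivation of (ii) is a leguine improvement in readability, since it avoids an external reference.

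There is one slip in the attribution. You write that the cases $(X_i,Z_r,Z_s)$ and $(Z_r,Z_s,X_k)$ give all three vanishing statements $R^l_{irk}=0$, $R^{s_2}_{is_1 r_2}=0$, $R^l_{rsk}=0$. First, those two triples are cyclic rearrangements of each other, so they give the \emph{same} Bianchi relation. Pairing it with $X_l$ kills the two vertical curvature terms and directly yields $R^l_{rsk}=0$ (no $S_3$ juggling needed), while pairing with $Z_{s_2}$ leaves $R^{s_2}_{is_1 r_2} + R^{s_2}_{r_2 i s_1}=0$, which together with antisymmetry in the first pair gives symmetry of $R^{s_2}_{i\cdot\cdot}$ in its two middle vertical indices, and then metric antisymmetry in the last pair finishes via the $S_3$ argument. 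But $R^l_{irk}=0$ does \emph{not} come from this case. You must instead revisit the triple $(X_i,X_j,Z_r)$ — which you already invoked to obtain $R^s_{ijr}=T^s_{ij;r}$ — and pair the reduced Bianchi identity with $X_l$ rather than $Z_s$. Then $R(X_i,X_j)Z_r$ and $(\nabla_{Z_r}T)(X_i,X_j)$ drop out (both vertical), leaving $\langle R(X_j,Z_r)X_i,X_l\rangle + \langle R(Z_r,X_i)X_j,X_l\rangle = 0$, i.e.\ symmetry of $(i,j)\mapsto \langle R(X_i,Z_r)X_j,X_l\rangle$; combined with metric antisymmetry in $(j,l)$ this forces the tensor to vanish. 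So the $S_3$ argument you describe is correct but must be fed the right triple. Once this is corrected, your derivations of (iv)--(vi) from the curvature expansion of $\hat\nabla^\ve = \nabla + \tfrac{1}{\ve}J$, using $J_X=0$ for $X\in\calH$ and $T(Z,\cdot)=0$ for $Z\in\calV$, match the paper exactly.
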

\begin{proof}
From \eqref{hatnablave}, we observe that
\begin{equation} \label{hatRtoR} \hat R^\ve(X,Y) Z = R(X,Y) Z+ \frac{1}{\ve} (\nabla_X J)_Y Z - \frac{1}{\ve} (\nabla_Y J)_X Z + \frac{1}{\ve} J_{T(X,Y)}Z +\frac{1}{\ve^2} [J_X, J_Y]Z.\end{equation}
We will also use the first Bianchi identity for connections with torsion
$$\circlearrowright R(X,Y) Z = \circlearrowright (\nabla_X T)(X, Y) + \circlearrowright T(T(X, Y),Z),$$
where $\circlearrowright$ denotes the cyclic sum. We furthermore observe the following identities.
\begin{enumerate}[\rm (i)]
\item Since $\langle T(Y_1,Y_2), Y_3 \rangle$ and $T(T(Y_1, Y_2), Y_3)$ vanishes if $Y_1, Y_2, Y_3$ are either all vertical or all horizontal, 
\begin{align*}
\langle R(X_1, X_2) X_3, X_4 \rangle_g & = \langle R(X_3, X_4) X_1, X_2 \rangle_g, \\
\langle R(Z_1, Z_2) Z_3, Z_4 \rangle_g & = \langle R(Z_3, Z_4) Z_1, Z_2 \rangle_g,
\end{align*}
for any $X_i \in \Gamma(\calH)$, $Z_i \in \Gamma(\calV)$, $i=1,2,3, 4$.
\item From \cite[Appendix A]{Gro20}, we know that for $X_1, X_2 \in \Gamma(\calH)$, $Z_1, Z_2 \in \Gamma(\calV)$,
$$R(X_1, X_2) Z_1 = (\nabla_{Z_1} T)(X_1, X_2),\quad R(X_1, Z_1) X_2 =0 \quad R(X_1, Z_1) Z_2 =0.$$
\item Since $\nabla$ is compatible with the metric then $(\nabla_Z J)_Z =0$ for any $Z \in \Gamma(\calV)$, as for any $X_1, X_2 \in \Gamma(\calH)$,
\begin{align*}
0 = \langle Z, R(X_1, X_2) Z \rangle_g& =  \langle Z, \circlearrowright R(X_1, X_2) Z \rangle_g \\
& = \langle Z, (\nabla_Z T)(X_1, X_2) \rangle_g = \langle X_2, (\nabla_Z J)_Z X_1 \rangle_g.
\end{align*}
\item We observe first that from \eqref{hatRtoR}, for any $X_1, X_2, X_3, X_4 \in \Gamma(\calH)$
\begin{align*}
& \langle \hat R^\ve(X_1, X_2) X_3, X_4 \rangle_g = \langle R(X_1, X_2) X_3, X_4 \rangle_g +\frac{1}{\ve} \langle J_{T(X_1, X_2)} X_3, X_4 \rangle_g \\
& \stackrel{\rm (i)}{=} \langle R(X_3, X_4) X_1, X_2 \rangle_g +\frac{1}{\ve} \langle T(X_1, X_2) , T(X_3, X_4) \rangle_g.\end{align*}
\item Next, for any $X_1, X_2 \in \Gamma(\calH)$, $Z \in \Gamma(\calV)$,
$$\hat R^\ve(X_1, Z) X_2 \stackrel{\rm (ii)}{=}  \frac{1}{\ve} (\nabla_{X_1} J)_{Z} X_2.$$
\item For the final property observe that
\begin{align*}
R(Z_1, Z_2) X_1 & \stackrel{\rm (ii)}{=} \circlearrowright R(Z_1, Z_2) X_1 = 0.
\end{align*}
Hence
\begin{align*}
\hat R^\ve(Z_1, Z_2) X_1 & = \frac{1}{\ve} (\nabla_{Z_1} J)_{Z_2} X_1 - \frac{1}{\ve} (\nabla_{Z_2} J)_{Z_1} X_1 + \frac{1}{\ve^2} [J_{Z_1}, J_{Z_2} ] X_1 \\
& \stackrel{\rm (iii)}{=} \frac{2}{\ve} (\nabla_{Z_1} J)_{Z_2} X_1 + \frac{1}{\ve^2} [J_{Z_1}, J_{Z_2} ] X_1. \qedhere
\end{align*}
\end{enumerate}
\end{proof}

\bibliographystyle{abbrv}
\bibliography{Bibliography}

\end{document}